\begin{document}

\title{\MakeUppercase{\rnervestitle}}
\date{\today}
\author{\textsc{Aaron Mazel-Gee}}

\begin{abstract}
We functorially associate to each relative $\infty$-category $(\R,\bW)$ a simplicial space $\NerveRezki(\R,\bW)$, called its \textit{Rezk nerve} (a straightforward generalization of Rezk's ``classification diagram'' construction for relative categories).  We prove the following \textit{local} and \textit{global} universal properties of this construction: (i) that the complete Segal space generated by the Rezk nerve $\NerveRezki(\R,\bW)$ is precisely the one corresponding to the localization $\loc{\R}{\bW}$; and (ii) that the Rezk nerve functor defines an equivalence $\loc{\RelCati}{\bW_\BarKan} \xra{\sim} \Cati$ from a localization of the $\infty$-category of relative $\infty$-categories to the $\infty$-category of $\infty$-categories.
\end{abstract}

\maketitle

\papernum{2}

\setcounter{tocdepth}{1}
\tableofcontents

\setcounter{section}{-1}

\section{Introduction}

\subsection{The Rezk nerve}

\boilerplateintro{\R}\footnote{For instance, even in the case that $\R$ is a one-object 1-category and we are only interested in its \textit{1-categorical} localization, i.e.\! the composite $\R \ra \loc{\R}{\bW} \ra \ho(\loc{\R}{\bW}) \simeq \R[\bW^{-1}]$ -- that is, in the case that we are interested in freely inverting certain elements of a monoid --, obtaining a concrete description is nevertheless an intractable (in fact, computationally undecidable) task, closely related to the so-called ``word problem'' for generators and relations in abstract algebra.}  To ameliorate this state of affairs, in this paper we provide a novel method of accessing this localization via Rezk's theory of \bit{complete Segal spaces}.

To describe this, let us first recall that the $\infty$-category $\CSS$ of complete Segal spaces participates in a diagram
\[ \begin{tikzcd}[column sep=1.5cm]
s\S
{\arrow[transform canvas={yshift=0.7ex}]{r}{\leftloc_\CSS}[swap, transform canvas={yshift=0.25ex}]{\scriptstyle \bot} \arrow[transform canvas={yshift=-0.7ex}, hookleftarrow]{r}[swap]{\forget_\CSS}}
& \CSS
{\arrow[transform canvas={yshift=0.7ex}]{r}{\Nervei^{-1}}[swap, transform canvas={yshift=0.05ex}]{\sim} \arrow[transform canvas={yshift=-0.7ex}, leftarrow]{r}[swap]{\Nervei}}
& \Cati .
\end{tikzcd} \]
That is, it sits as a reflective subcategory of the $\infty$-category $s\S$ of simplicial spaces, and it is equivalent to the $\infty$-category $\Cati$ of $\infty$-categories.  In particular, one can contemplate the complete Segal space (or equivalently, the $\infty$-category) \textit{generated} by an arbitrary simplicial space $Y$, much as one can contemplate the 1-category generated by an arbitrary simplicial set: this is encoded by the unit
\[ Y \xra{\eta} \leftloc_\CSS(Y) \]
of the adjunction (where we omit the inclusion functor $\forget_\CSS$ for brevity).

Now, given a relative $\infty$-category $(\R,\bW)$, its \bit{Rezk nerve} is a certain simplicial space
\[ \NerveRezki(\R,\bW) \in s\S \]
which ``wants to be'' the complete Segal space
\[ \Nervei(\loc{\R}{\bW}) \in \CSS \]
corresponding to its localization:
\begin{itemizesmall}
\item it admits canonical maps
\[ \Nervei(\R) \ra \NerveRezki(\R,\bW) \ra \Nervei(\loc{\R}{\bW}) , \]
and moreover
\item its construction manifestly dictates that for any $\infty$-category $\C$, the restriction map
\[ \hom_{s\S}(\NerveRezki(\R,\bW) , \Nervei(\C)) \ra \hom_{s\S}(\Nervei(\R),\Nervei(\C)) \simeq \hom_\Cati(\R,\C) \]
factors through the subspace of those functors $\R \ra \C$ sending all maps in $\bW \subset \R$ to equivalences in $\C$.
\end{itemizesmall}
Unfortunately, life is not quite so simple: the Rezk nerve is not generally a complete Segal space (or even a Segal space).\footnote{We provide sufficient conditions on $(\R,\bW)$ for its Rezk nerve $\NerveRezki(\R,\bW)$ to be a (complete) Segal space in \cite{MIC-hammocks}.}  Nevertheless, the second-best-possible thing is true.

\begin{thm*}[\ref{rezk nerve of a relative infty-category is initial}]
The above maps extend to a commutative diagram
\[ \begin{tikzcd}
\Nervei(\R) \arrow{r} \arrow{d}[swap]{\eta}[sloped, anchor=south]{\sim} & \NerveRezki(\R,\bW) \arrow{r} \arrow{d}{\eta} & \Nervei(\loc{\R}{\bW}) \arrow{d}{\eta}[sloped, anchor=north]{\sim} \\
\leftloc_\CSS(\Nervei(\R)) \arrow{r} & \leftloc_\CSS(\NerveRezki(\R,\bW)) \arrow{r}[swap]{\sim} & \leftloc_\CSS(\Nervei(\loc{\R}{\bW})) .
\end{tikzcd} \]
\end{thm*}

\noindent In other words, the complete Segal space generated by the Rezk nerve of $(\R,\bW)$ is precisely the one corresponding to its localization.

This theorem provides a \textit{local} universal property of the Rezk nerve: it asserts that the composite
\[ \RelCati \xra{\NerveRezki} s\S \xra{\leftloc_\CSS} \CSS \xra[\sim]{\Nervei^{-1}} \Cati \]
takes each relative $\infty$-category $(\R,\bW)$ to its localization $\loc{\R}{\bW}$.  However, it says nothing about the effect of this composite on \textit{morphisms} of relative $\infty$-categories.  To this end, we also prove the following.

\begin{thm*}[\ref{rezk nerve induces localization} and \ref{Rezk nerve computes localization}]
The above composite is canonically equivalent to the localization functor
\[ \RelCati \ra \Cati . \]
In particular, denoting by $\bW_\BarKan \subset \RelCati$ the subcategory of maps which it takes to equivalences, the above composite induces an equivalence
\[ \loc{\RelCati}{\bW_\BarKan} \xra{\sim} \Cati . \]
\end{thm*}

\noindent In other words, the Rezk nerve functor does indeed \textit{functorially} compute localizations of relative $\infty$-categories, and moreover the induced ``homotopy theory'' on the $\infty$-category $\RelCati$ of relative $\infty$-categories -- that is, the relative $\infty$-category structure $(\RelCati,\bW_\BarKan)$ that results therefrom -- gives a presentation of the $\infty$-category $\Cati$ of $\infty$-categories.  We therefore deem this result as capturing the \textit{global} universal property of the Rezk nerve.

% \subsection{Relation to existing literature}

\begin{rem}\label{rem intro infty-catl rezk nerve agrees with 1-catl rezk nerve}
The Rezk nerve functor is a close cousin of Rezk's ``classification diagram'' functor of \cite[3.3]{RezkCSS}; to emphasize the similarity, we denote the latter functor by
\[ \strrelcat \xra{\NerveRezk} ss\Set \]
and refer to it as the \textit{1-categorical Rezk nerve}.  In fact, as we explain in \cref{infty-catl rezk nerve agrees with 1-catl rezk nerve}, this is essentially just the restriction of the $\infty$-categorical Rezk nerve functor, in the sense that there is a canonical commutative diagram
\[ \begin{tikzcd}
\strrelcat \arrow{d} \arrow{r}{\NerveRezk} & s(s\Set) \arrow{r}{s(|{-}|)} & s\S \\
\RelCati \arrow[bend right=10]{rru}[swap, sloped, pos=0.4]{\NerveRezki}
\end{tikzcd} \]
in $\Cati$.  In \cref{BK proved global univ prop for relcats}, we use this observation to show that our global universal property of the $\infty$-categorical Rezk nerve can be seen as a generalization of work of Barwick--Kan.
\end{rem}

\subsection{Conventions}

\partofMIC

\tableofcodenames

\examplecodename

\citelurie \ \luriecodenames

\butinvt \ \seeappendix

\subsection{Outline}

We now provide a more detailed outline of the contents of this paper.

\begin{itemize}

\item In \cref{section rel-infty-cats and localizns}, we undertake a study of relative $\infty$-categories and their localizations.

\item In \cref{section CSSs}, we briefly review the theory of complete Segal spaces.

\item In \cref{section rezk nerve}, we introduce the Rezk nerve and state its local and global universal properties.  We give a proof of the global universal property which relies on the local one, but we defer the proof of the local one to \cref{section proof of nerve}.

\item In \cref{section proof of nerve}, we prove the local universal property of the Rezk nerve.  Though much of the proof is purely formal, at its heart it ultimately relies on some rather delicate model-categorical arguments.

\end{itemize}

\subsection{Acknowledgments}

We heartily thank Zhen Lin Low, Eric Peterson, Chris Schommer-Pries, and Mike Shulman for many (sometimes extremely extended) discussions regarding the material in this paper, particularly the proof of \cref{natural mono}.  It is also our pleasure to thank Katherine de Kleer for writing a Python script verifying the identities for the simplicial homotopies defined therein.\footnote{This script is readily available upon request.}  Lastly, we thank the NSF graduate research fellowship program (grant DGE-1106400) for its financial support during the time that this work was carried out.

\section{Relative $\infty$-categories and their localizations}\label{section rel-infty-cats and localizns}

Given an $\infty$-category and some chosen subset of its morphisms, we are interested in freely inverting those morphisms.  In order to codify these initial data, we introduce the following.

\begin{defn}\label{define rel infty-cat}
A \bit{relative $\infty$-category} is a pair $(\R,\bW)$ of an $\infty$-category $\R$ and a subcategory $\bW\subset \R$, called the subcategory of \bit{weak equivalences}, such that $\bW$ contains all the equivalences (and in particular, all the objects) in $\R$.  These form the evident $\infty$-category $\RelCati$.\footnote{To be precise, one can view $\RelCati \simeq \Fun^{\textup{surj mono}}([1],\Cati) \subset \Fun([1],\Cati)$ as the full subcategory on those functors selecting the inclusion of a surjective monomorphism.}  Weak equivalences will be denoted by the symbol $\we$.  Though we will of course write $\R$ for the $\infty$-category obtained by forgetting $\bW$, to ease notation we will also sometimes simply write $\R$ for the pair $(\R,\bW)$.  We write $\RelCat \subset \RelCati$ for the full subcategory on those relative $\infty$-categories $(\R,\bW)$ such that $\R \in \Cat \subset \Cati$.
\end{defn}

\begin{rem}\label{weaker defn of rel infty-cat}
As we are working invariantly, our \cref{define rel infty-cat} is not quite a generalization of the 1-category $\strrelcat$ of relative categories as given e.g.\! in \cite[3.1]{BK-relcats} or \cite[Definition 3.1]{LowMG}, an object of which is a \textit{strict} category $\R \in \strcat$ (see subitem \Cref{sspaces:section conventions}\cref{sspaces:work invariantly with qcats}\cref{sspaces:strict cats}) equipped with a wide subcategory $\bW \subset \R$ (i.e.\! one containing all the objects).  For emphasis, we will therefore sometimes refer to objects of $\strrelcat$ as \textit{strict relative categories}.

In addition to being the only meaningful variant in the invariant world, \cref{define rel infty-cat} allows for a clean and aesthetically appealing definition of localization, namely as a left adjoint (see \cref{define localization}).  In any case, as we are ultimately only interested in relative $\infty$-categories because we are interested in their localizations, this requirement is no real loss.

Despite these differences, there is an evident functor
\[ \strrelcat \ra \RelCat , \]
to which we will refer on occasion.
\end{rem}

\begin{notn}\label{notn for decorating relcat data}
In order to disambiguate our notation associated to various relative $\infty$-categories, we introduce the following conventions.
\begin{itemize}

\item When multiple relative $\infty$-categories are under discussion, we will sometimes decorate them for clarity.  For instance, we may write $(\R_1,\bW_1)$ and $(\R_2,\bW_2)$ to denote two arbitrary relative $\infty$-categories, or we may instead write $(\I,\bW_\I)$ and $(\J,\bW_\J)$.

\item Moreover, we will eventually study certain ``named'' relative $\infty$-categories; for example, there is a \textit{Barwick--Kan relative structure} on $\RelCati$ itself (see \cref{define BarKan rel str on RelCati}).  We will always subscript the subcategory of weak equivalences of such a relative $\infty$-category with (an abbreviation of) its name; for example, we will write $\bW_\BarKan \subset \RelCati$. We may also merely similarly subscript the ambient $\infty$-category to denote the relative $\infty$-category; for example, we will write $(\RelCati)_\BarKan = (\RelCati , \bW_\BarKan)$.

\item Finally, there will occasionally be two different $\infty$-categories with relative structures of the same name.  In such cases, if disambiguation is necessary we will additionally superscript the subcategory of weak equivalences with the name of the ambient $\infty$-category.  For instance, we would write $\bW^\RelCati_\BarKan \subset \RelCati$ to distinguish it from the subcategory $\bW^\strrelcat_\BarKan \subset \strrelcat$.

\end{itemize}
\end{notn}

We have the following fundamental source of examples of relative $\infty$-categories.

\begin{ex}\label{ex create relative infty-category}
If $\R \ra \C$ is any functor of $\infty$-categories, we can define a relative $\infty$-category $(\R,\bW)$ by declaring $\bW \subset \R$ to be the subcategory on those maps that are sent to equivalences in $\C$.  Note that $\bW \subset \R$ will automatically have the two-out-of-three property.
\end{ex}

\begin{defn}
In the situation of \cref{ex create relative infty-category}, we will say that the functor $\R \ra \C$ \bit{creates} the subcategory $\bW \subset \R$.
\end{defn}

We will make heavy use of the following construction.

\begin{notn}\label{define internal hom in relcats}
Given any $(\R_1,\bW_1) , (\R_2,\bW_2) \in \RelCati$, we define
\[ \left( \Fun(\R_1,\R_2)^\Rel , \Fun(\R_1,\R_2)^\bW \right) \in \RelCati \]
by setting
\[ \Fun(\R_1,\R_2)^\Rel \subset \Fun(\R_1,\R_2) \]
to be the full subcategory on those functors which send $\bW_1 \subset \R_1$ into $\bW_2 \subset \R_2$, and setting
\[ \Fun(\R_1,\R_2)^\bW \subset \Fun(\R_1,\R_2)^\Rel \]
to be the (generally non-full) subcategory on the natural weak equivalences.\footnote{If we consider $\RelCati \subset \Fun([1],\Cati)$, then $\Fun(\R_1,\R_2)^\Rel$ is simply the $\infty$-category of natural transformations.}  It is not hard to see that this defines an internal hom bifunctor for $(\RelCati,\times)$.
\end{notn}

It will be useful to have the following terminology.

\begin{defn}\label{trivial relative infty-category structures}
If $\C$ is any $\infty$-category, we call $(\C,\C^\simeq)$ the associated \bit{minimal relative $\infty$-category} and we call $(\C,\C)$ the associated \bit{maximal relative $\infty$-category}.  These define fully faithful inclusions
\[ \begin{tikzcd}[column sep=1.5cm]
\Cati \arrow[bend left, in=155]{r}{\min} \arrow[bend right, in=205]{r}[swap]{\max} & \RelCati \arrow{l}[pos=0.45, transform canvas={yshift=-0.6ex}]{\perp}[swap, pos=0.45, transform canvas={yshift=0.6ex}]{\perp}
\end{tikzcd} \]
which are respectively left and right adjoint to the forgetful functor $\RelCati \xra{\forget_\Rel} \Cati$ sending $(\R,\bW)$ to $\R$.  For $[n] \in \bD \subset \Cati$, we will use the abbreviation $[n]_\bW = \max([n])$, since these relative categories will appear quite often; correspondingly, we will also make the implicit identification $[n] = \min([n])$.
\end{defn}

We now come to our central object of interest.

\begin{defn}\label{define localization}
The functor $\min : \Cati \ra \RelCati$ also admits a left adjoint
\[ \RelCati \xra{\locL} \Cati , \]
which we refer to as the \bit{localization} functor on relative $\infty$-categories.  For a relative $\infty$-category $(\R,\bW) \in \RelCati$, we will often write $\loc{\R}{\bW} = \locL(\R,\bW)$; we only write $\locL$ since the notation $\loc{(-)}{(-)}$ is a bit unwieldy.  Explicitly, its value on $(\R,\bW) \in \RelCati$ can be obtained as the pushout
\[ \loc{\R}{\bW} \simeq \colim \left( \begin{tikzcd}
\bW \arrow{r} \arrow{d} & \R \\
\bW^\gpd
\end{tikzcd} \right) \]
in $\Cati$ (and the functor itself can be obtained by applying this construction in families).
\end{defn}

\begin{rem}
Using model categories, one can of course compute the pushout in $\Cati$ of \cref{define localization} by working in $s\Set_\Joyal$ (which is left proper), for instance after presenting the map $\bW \ra \bW^\gpd$ using the derived unit of the Quillen adjunction $\id : s\Set_\Joyal \adjarr s\Set_\KQ : \id$, i.e.\! after taking a fibrant replacement via a cofibration in $s\Set_\KQ$ of a quasicategory presenting $\bW$.  However, note that this derived unit can be quite difficult to describe in practice, and moreover the resulting pushout will generally still be very far from being a quasicategory.  Equally inexplicitly, one can also obtain a quasicategory presenting $\loc{\R}{\bW}$ by computing a fibrant replacement in the marked model structure of Proposition T.3.1.3.7 (i.e.\! in the specialization of the model structure given there to the case where the base is the terminal object $\pt_{s\Set}$).
\end{rem}

\begin{rem}\label{rem localization induces mono of spaces}
We will also use the term ``localization'' to refer to the canonical map $\R \ra \loc{\R}{\bW}$ in $\Cati$ satisfying the universal property that for any $\C \in \Cati$, the restriction
\[ \hom_{\Cati}(\loc{\R}{\bW},\C) \ra \hom_{\Cati}(\R,\C) \]
defines an equivalence onto the subspace
\[ \hom_{\RelCati}((\R,\bW),\min(\C)) \subset \hom_{\Cati}(\R,\C) \]
of those functors which take $\bW$ into $\C^\simeq$.\footnote{This map can be obtained either by applying $\RelCati \xra{\locL} \Cati$ to the counit $\min(\R) \ra (\R,\bW)$ of the adjunction $\min \adj \forget_\Rel$, or by applying $\RelCati \xra{\forget_\Rel} \Cati$ to the unit $(\R,\bW) \ra \min(\loc{\R}{\bW})$ of the adjunction $\locL \adj \min$.}  Thus, by definition the map $\R \ra \loc{\R}{\bW}$ is an epimorphism in $\Cati$.
\end{rem}

\begin{ex}\label{ex minimal localization}
The localization of a minimal relative $\infty$-category $\min(\C) = (\C,\C^\simeq)$ is simply the identity functor $\C \xra{\sim} \C$.
\end{ex}

\begin{ex}\label{ex maximal localization}
The localization of a maximal relative $\infty$-category $\max(\C) = (\C,\C)$ is the groupoid completion functor $\C \ra \C^\gpd$ (i.e.\! the component at $\C$ of the unit of the adjunction $(-)^\gpd : \Cati \adjarr \S : \forget_\S$).
\end{ex}

\begin{ex}\label{ex left localization qua free localization}
Given a left localization adjunction $\leftloc : \C \adjarr \leftloc\C : \forget$, if we define $\bW \subset \C$ to be created by $\C \xra{\leftloc} \leftloc\C$, then the localization of $(\C,\bW)$ is precisely $\C \xra{\leftloc} \leftloc\C$: that is, the functor $\C \xra{\leftloc} \leftloc\C$ induces an equivalence $\loc{\C}{\bW} \xra{\sim} \leftloc\C$, which is in fact inverse to the composite $\leftloc\C \xra{\forget} \C \ra \loc{\C}{\bW}$.  This follows from Proposition T.5.2.7.12, or alternatively from \cref{nat w.e. induces equivce betw fctrs} (see \cref{rem re-prove that a left localization is a free localization}).  Of course, a dual statement holds for right localization adjunctions.
\end{ex}

For an arbitrary relative $\infty$-category $(\R,\bW)$, note that the localization map $\R \ra \loc{\R}{\bW}$ might \textit{not} create the subcategory $\bW \subset \R$: there might be strictly more maps in $\R$ which are sent to equivalences in $\loc{\R}{\bW}$.  This leads us to the following notion.

\begin{defn}\label{define saturated}
A relative $\infty$-category $(\R,\bW)$ is called \bit{saturated} if the localization map $\R \ra \loc{\R}{\bW}$ creates the subcategory $\bW \subset \R$.
\end{defn}

\begin{rem}
If a relative $\infty$-category $(\R,\bW) \in \RelCati$ has its subcategory of weak equivalences $\bW \subset \R$ created by \textit{any} functor $\R \ra \C$, then $(\R,\bW)$ will automatically be saturated.  This is true by definition if $\bW \subset \R$ is created by the localization functor $\R \ra \loc{\R}{\bW}$.  More generally, if it is created by any other functor $\R \ra \C$, then in the canonical factorization
\[ \R \ra \loc{\R}{\bW} \ra \C , \]
the second functor will be conservative.  Hence, it will be also true that the subcategory $\bW \subset \R$ is created by the localization map $\R \ra \loc{\R}{\bW}$, which reduces us to the previous special case.
\end{rem}

Now, we will be using relative $\infty$-categories as ``presentations of $\infty$-categories'', namely of their localizations.  However, a map of relative $\infty$-categories may induce an equivalence on localizations without itself being an equivalence in $\RelCati$.  This leads us to the following notion.

\begin{defn}\label{define BarKan rel str on RelCati}
We define the subcategory $\bW_\BarKan \subset \RelCati$ of \bit{Barwick--Kan weak equivalences} to be created by the localization functor $\RelCati \xra{\locL} \Cati$.  We denote the resulting relative $\infty$-category by $(\RelCati)_\BarKan = (\RelCati , \bW_\BarKan) \in \RelCati$.
\end{defn}

The following result then justifies our usage of relative $\infty$-categories as ``presentations of $\infty$-categories''.

\begin{prop}\label{loc induces BK-equivce}
The functors in the left localization adjunction $\locL : \RelCati \adjarr \Cati : \min$ induce inverse equivalences
\[ \loc{\RelCati}{\bW_\BarKan} \simeq \Cati \]
in $\Cati$.
\end{prop}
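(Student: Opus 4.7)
The plan is to recognize this proposition as a direct instance of \cref{ex left localization qua free localization}. Setting $\C = \RelCati$, $\leftloc = \locL$, $\leftloc\C = \Cati$, and $\forget = \min$, we are in the situation of a left localization adjunction, provided we know that $\min : \Cati \to \RelCati$ is fully faithful. This is precisely the content of \cref{trivial relative infty-category structures}; alternatively, one checks directly that for $\C,\D \in \Cati$, the mapping space $\hom_\RelCati(\min(\C),\min(\D))$ consists of those functors $\C \to \D$ sending $\C^\simeq$ into $\D^\simeq$, but this condition is automatic, so this space equals $\hom_\Cati(\C,\D)$.

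Next, \cref{define BarKan rel str on RelCati} tells us that $\bW_\BarKan \subset \RelCati$ is by definition the subcategory created by the left adjoint $\locL$. Thus the hypotheses of \cref{ex left localization qua free localization} are met verbatim, and its conclusion yields that $\locL$ induces an equivalence $\loc{\RelCati}{\bW_\BarKan} \xra{\sim} \Cati$ whose inverse is the composite $\Cati \xra{\min} \RelCati \to \loc{\RelCati}{\bW_\BarKan}$. Equivalently, unwinding: $\locL$ sends $\bW_\BarKan$ to equivalences tautologically (by the definition of $\bW_\BarKan$), so $\locL$ factors through the localization; and the universal property of the localization together with the unit and counit of $\locL \dashv \min$, combined with the fact that $\min$ is fully faithful, assemble into the required pair of inverse equivalences.

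There is essentially no main obstacle, since the substantive work has already been carried out in \cref{ex left localization qua free localization} (which in turn cites Lurie's Proposition T.5.2.7.12 or the later \cref{nat w.e. induces equivce betw fctrs}). The present proposition merely unpacks the two relevant definitions, namely that of $\bW_\BarKan$ and the fully faithfulness of $\min$, and observes that \cref{ex left localization qua free localization} then applies directly.
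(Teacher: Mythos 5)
Your proposal is correct and follows exactly the same route as the paper, which proves this proposition in one line as a special case of \cref{ex left localization qua free localization}. Your additional checks (that $\min$ is fully faithful and that $\bW_\BarKan$ is by definition created by $\locL$) are accurate and simply make explicit what the paper leaves implicit.
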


\begin{proof}
This is a special case of \cref{ex left localization qua free localization}.
\end{proof}

We have the following strengthening of \cref{rem localization induces mono of spaces}.

\begin{prop}\label{localization induces full mono of infty-cats}
For any $(\R,\bW) \in \RelCati$ and any $\C \in \Cati$, the restriction
\[ \Fun(\loc{\R}{\bW},\C) \ra \Fun(\R,\C) \]
along the localization functor $\R \ra \loc{\R}{\bW}$ defines an equivalence onto the full subcategory of $\Fun(\R,\C)$ spanned by those functors which take $\bW$ into $\C^\simeq$.  
\end{prop}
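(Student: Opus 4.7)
The strategy is to upgrade Remark \ref{rem localization induces mono of spaces} from the level of hom-spaces to the level of functor $\infty$-categories by exploiting cartesian closure of $\Cati$. Concretely, I will verify the claim by applying the Yoneda lemma: it suffices to show that for every $X \in \Cati$, the restriction induces an equivalence
\[ \hom_\Cati(X, \Fun(\loc{\R}{\bW},\C)) \xra{\sim} \hom_\Cati(X, \Fun^\bW(\R,\C)), \]
where $\Fun^\bW(\R,\C) \subset \Fun(\R,\C)$ denotes the purported full subcategory, and to check naturality in $X$.

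For the left-hand side, I would use the adjunction $(X \times -) \dashv \Fun(X,-)$ to rewrite
\[ \hom_\Cati(X, \Fun(\loc{\R}{\bW},\C)) \simeq \hom_\Cati(\loc{\R}{\bW}, \Fun(X,\C)), \]
and then apply Remark \ref{rem localization induces mono of spaces} to identify this with the subspace of $\hom_\Cati(\R, \Fun(X,\C))$ of functors sending $\bW$ into $\Fun(X,\C)^\simeq$. Since equivalences in $\Fun(X,\C)$ are detected pointwise, this subspace is precisely the subspace of $\hom_\Cati(X \times \R, \C)$ consisting of those $F$ for which $F(\id_x,w)$ is an equivalence in $\C$ for every $x \in X$ and every $w \in \bW$.

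For the right-hand side, because $\Fun^\bW(\R,\C) \subset \Fun(\R,\C)$ is \emph{full}, a functor $X \to \Fun^\bW(\R,\C)$ is just a functor $X \to \Fun(\R,\C)$ each of whose values is a $\bW$-inverting functor. Under the identification $\hom_\Cati(X, \Fun(\R,\C)) \simeq \hom_\Cati(X \times \R, \C)$, this is again precisely the subspace of functors $F$ with $F(\id_x,w) \in \C^\simeq$ for all $x \in X$ and $w \in \bW$ --- the very same subspace as above. Both identifications are manifestly natural in $X$, so Yoneda yields the desired equivalence.

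I expect the only subtle point to be the bookkeeping in the identification for the right-hand side, namely that a functor into a full subcategory is determined by the requirement that it land there on objects; but this is standard and requires no further input. Everything else is formal manipulation with cartesian closure and the hom-level universal property of localization already established in Remark \ref{rem localization induces mono of spaces}.
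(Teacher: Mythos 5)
Your proof is correct and takes a genuinely different route than the paper's. The paper argues in two stages: it first shows the restriction functor is a monomorphism in $\Cati$, by exhibiting it as a pullback of the monic restriction $\Fun(\bW^\gpd,\C) \ra \Fun(\bW,\C)$, and then separately establishes fullness via an explicit cube-shaped pushout diagram, which requires both that $[1] \times -$ preserves colimits and --- crucially --- that the localization functor $\locL$ commutes with finite products (\cref{localization preserves finite products}). You instead invoke Yoneda in $\Cati$: transposing through cartesian closure reduces the claim to the hom-space universal property (\cref{rem localization induces mono of spaces}) applied with $\Fun(X,\C)$ in place of $\C$, and the two resulting subspaces of $\hom_\Cati(X \times \R, \C)$ coincide because equivalences in $\Fun(X,\C)$ are detected pointwise and because a map into a full subcategory is exactly a map into the ambient $\infty$-category landing in it objectwise. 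Notably, your route avoids \cref{localization preserves finite products} entirely and so gives a more economical derivation; what the paper's two-step decomposition buys is that the ``subcategory inclusion'' and ``fullness'' facts are made visible separately (and \cref{localization preserves finite products} is wanted elsewhere in any case), while your Yoneda argument delivers the statement in one stroke, with the double-transposition bookkeeping handled correctly.
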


\begin{proof}
We begin by observing that this functor is a monomorphism in $\Cati$: this is because we have a pullback diagram
\[ \begin{tikzcd}
\Fun( \loc{\R}{\bW} , \C) \arrow{r} \arrow{d} & \Fun(\bW^\gpd , \C) \arrow{d} \\
\Fun(\R,\C) \arrow{r} & \Fun(\bW , \C)
\end{tikzcd} \]
in $\Cati$ in which the right arrow is clearly a monomorphism, and monomorphisms are closed under pullback.  So in particular, this functor is the inclusion of a subcategory.  Then, to see that it is full, suppose we are given two functors $\loc{\R}{\bW} \rra \C$, considered as objects of $\Fun(\loc{\R}{\bW} , \C)$.  A natural transformation between their images in $\Fun(\R,\C)$ is given by a functor $[1] \times \R \ra \C$ which restricts to the the two composites $\R \ra \loc{\R}{\bW} \rra \C$ on the two objects $0 , 1 \in [1]$.  Since we already know that $\Fun(\loc{\R}{\bW} , \C) \subset \Fun(\R,\C)$ is the inclusion of a subcategory, it suffices to obtain an extension
\[ \begin{tikzcd}
{[1] \times \R} \arrow{r} \arrow{d} & \C \\
{[1] \times \loc{\R}{\bW}} \arrow[dashed]{ru}
\end{tikzcd} \]
in $\Cati$.  For this, consider the diagram
\[ \begin{tikzcd}
& \{0,1\} \times \bW \arrow{rr} \arrow{ld} \arrow{dd} & & \{0 , 1 \} \times \bW^\gpd \arrow{ld} \arrow{dd} \\
\{0,1\} \times \R \arrow[crossing over]{rr} \arrow{dd} & & \{0,1\} \times \loc{\R}{\bW} & & \C \\
& {[1] \times \bW} \arrow{rr} \arrow{ld} & & {[1] \times \bW^\gpd} \arrow{ld} \\
{[1] \times \R} \arrow{rr} \arrow[crossing over, controls={+(3,0.5) and +(-1.8,1)}, out=30]{rrrruu} & & {[1] \times \loc{\R}{\bW}} \arrow[leftarrow, crossing over]{uu}
\latearrow{crossing over}{2-3}{2-5}{}
\end{tikzcd} \]
in $\Cati$ containing and extending the above data.  The bottom square is a pushout since the functor $[1] \times - : \Cati \ra \Cati$ is a left adjoint, and the back square is a pushout by \cref{localization preserves finite products}.  Together, these observations guarantee the desired extension.
\end{proof}

\begin{rem}
\cref{localization induces full mono of infty-cats} implies that \cref{define localization} agrees with Definition A.1.3.4.1.
\end{rem}

We now make an easy observation regarding the localization functor, which is necessary for the argument of \cref{localization induces full mono of infty-cats} but will also be useful in its own right.

\begin{lem}\label{localization preserves finite products}
The localization functor $\locL : \RelCati \ra \Cati$ commutes with finite products.
\end{lem}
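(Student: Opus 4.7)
The plan is to proceed by Yoneda at the level of hom-spaces: writing $P_i = \loc{\R_i}{\bW_i}$, I will show that the canonical map $\R_1 \times \R_2 \ra P_1 \times P_2$ satisfies the universal property of the localization of $(\R_1 \times \R_2 , \bW_1 \times \bW_2)$ as characterized in \cref{rem localization induces mono of spaces}. It is important to work strictly at the hom-space level here, since the richer functor-level universal property of \cref{localization induces full mono of infty-cats} depends on the present lemma and so must not be invoked.

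Fix a target $\C \in \Cati$. Iterating the Cartesian closure of $\Cati$ together with \cref{rem localization induces mono of spaces} once for each factor --- via the chain $\hom_{\Cati}(P_1 \times P_2 , \C) \simeq \hom_{\Cati}(P_1 , \Fun(P_2,\C)) \subset \hom_{\Cati}(\R_1 , \Fun(P_2,\C))$ and the analogous unwinding for the $P_2$-factor --- one identifies $\hom_{\Cati}(P_1 \times P_2 , \C) \subset \hom_{\Cati}(\R_1 \times \R_2 , \C)$ as the subspace of those $F : \R_1 \times \R_2 \ra \C$ satisfying both (i) $F(w,r_2)$ is an equivalence for all $w \in \bW_1$ and $r_2 \in \R_2$, and (ii) $F(r_1,w')$ is an equivalence for all $r_1 \in \R_1$ and $w' \in \bW_2$. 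The unwinding uses that equivalences in a functor $\infty$-category are detected pointwise, together with essential surjectivity of the localization map $\R_2 \ra P_2$ --- which is immediate from the pushout description of \cref{define localization}, since $\bW_2 \ra \bW_2^\gpd$ is bijective on objects. This essential-surjectivity reduction is the one place care is required to sidestep \cref{localization induces full mono of infty-cats}.

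Next, conditions (i) + (ii) are equivalent to the single requirement that $F(w,w')$ be an equivalence for every morphism $(w,w') \in \bW_1 \times \bW_2$: for $(\Leftarrow)$, take $w' = \id_{r_2}$ or $w = \id_{r_1}$ (valid since each $\bW_i$ contains all identities); for $(\Rightarrow)$, use the canonical factorization $(w,w') \simeq (\id_b , w') \circ (w , \id_{a'})$ in $\R_1 \times \R_2$ and closure of equivalences under composition. By a second application of \cref{rem localization induces mono of spaces}, now to $(\R_1 \times \R_2 , \bW_1 \times \bW_2)$, this single condition cuts out precisely $\hom_{\Cati}(\loc{\R_1 \times \R_2}{\bW_1 \times \bW_2} , \C)$. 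Both identifications are natural in $\C$ and compatible with the canonical localization map out of $\R_1 \times \R_2$, so Yoneda yields the desired equivalence $\loc{\R_1 \times \R_2}{\bW_1 \times \bW_2} \simeq P_1 \times P_2$. Preservation of the terminal object is immediate from $\locL(\pt,\pt) \simeq \pt$ (either by the pushout formula or directly from the adjunction), completing preservation of all finite products.
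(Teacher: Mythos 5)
Your proof is correct, but it takes a genuinely different route from the paper's. The paper factors out a reusable general lemma (\cref{exp ideals and monoidal products}): for a left localization $\leftloc\colon\C\adjarr\leftloc\C$ of a closed symmetric monoidal $\infty$-category, $\leftloc$ commutes with $\otimes$ exactly when $\leftloc\C\subset\C$ is an exponential ideal. \Cref{localization preserves finite products} then becomes a one-line observation: the image of $\min$ is an exponential ideal in $(\RelCati,\times)$, since $\Fun(\R,\min(\C))^\bW = (\Fun(\R,\min(\C))^\Rel)^\simeq$. You instead unwind the universal property of \cref{rem localization induces mono of spaces} by hand, using cartesian closure of $\Cati$ to peel off the two factors, reducing the factorization condition to the single requirement that $F(w,w')$ be an equivalence for $(w,w')\in\bW_1\times\bW_2$, and then applying \cref{rem localization induces mono of spaces} once more. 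Both arguments are Yoneda at heart; the paper's buys a general-purpose lemma that it reuses later (e.g.\ via the $\S\subset\Cati$ exponential ideal), while yours is more self-contained and reveals exactly where the product structure interacts with the weak equivalences.

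One step of your argument deserves a more careful justification than you give it. You invoke essential surjectivity of $\R_2\ra\loc{\R_2}{\bW_2}$, claiming it is ``immediate from the pushout description \ldots since $\bW_2\ra\bW_2^\gpd$ is bijective on objects.'' First, ``bijective on objects'' is not invariant language: the relevant fact is that the groupoid completion $\bW_2\ra\bW_2^\gpd$ is \emph{essentially surjective} (which follows, e.g., since $\pi_0(\bW_2^\gpd)$ is a quotient of $\pi_0(\bW_2^\simeq)$). Second, and more substantively, you then need that essential surjections are stable under cobase change in $\Cati$. This is true --- essentially surjective functors form the left class of the (essentially surjective, fully faithful) orthogonal factorization system on $\Cati$, and left classes of orthogonal factorization systems are always closed under pushout --- but it is a genuine input that should be named, not waved through as ``immediate.'' Note also that this essential-surjectivity step is doing real work for you: the more obvious alternative of appealing to conservativity of the restriction $\Fun(\loc{\R_2}{\bW_2},\C)\ra\Fun(\R_2,\C)$ (via fully faithfulness) is off-limits, precisely because that fully faithfulness is \cref{localization induces full mono of infty-cats}, which you correctly observe must be avoided. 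With the cobase-change fact supplied, the argument goes through.
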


For the proof of \cref{localization preserves finite products}, it will be convenient to have the following notion.

\begin{defn}
Let $(\C,\otimes)$ be a closed symmetric monoidal $\infty$-category with internal hom bifunctor
\[ \C^{op} \times \C \xra{\enrhom_\C(-,-)} \C . \]
A collection of objects $I$ of $\C$ is called an \bit{exponential ideal} if we have $\enrhom_\C(Y,Z) \in I$ for any $Y \in \C$ and any $Z \in I$.  We will use this same terminology to refer to a full subcategory $\D \subset \C$ whose objects form an exponential ideal.
\end{defn}

The following straightforward result explains why we are interested in exponential ideals.

\begin{lem}\label{exp ideals and monoidal products}
Suppose that $(\C,\otimes)$ is a closed symmetric monoidal $\infty$-category, and let $\leftloc : \C \adjarr \leftloc\C : \forget$ be a left localization with unit map $\id_\C \xra{\eta} \leftloc$ in $\Fun(\C,\C)$ (where we implicitly consider $\leftloc\C \subset \C$).  Then, the full subcategory $\leftloc\C \subset \C$ is an exponential ideal if and only if the natural map $\leftloc(\eta \otimes \eta)$ is an equivalence in $\Fun(\C \times \C , \C)$ (i.e.\! we have
\[ \leftloc(Y \otimes Z) \xra{\sim} \leftloc(\leftloc(Y) \otimes \leftloc(Z)) \]
in $\leftloc\C$ for all $Y,Z \in \C$).  In particular, if $\leftloc\C$ is closed under the monoidal structure, then $\leftloc\C \subset \C$ is an exponential ideal if and only if
\[ \leftloc(Y \otimes Z) \simeq \leftloc(Y) \otimes \leftloc(Z) \]
in $\leftloc\C$ for all $Y , Z \in \C$.
\end{lem}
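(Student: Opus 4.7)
The plan is to reduce both directions to the basic Yoneda principle that $X \in \leftloc\C$ iff $\hom_\C(-,X)$ inverts every $\leftloc$-weak equivalence, and dually that a morphism $f$ is a $\leftloc$-weak equivalence iff $\hom_\C(f,W)$ is an equivalence for every $W \in \leftloc\C$. The single computational tool beyond this principle is the defining tensor-hom adjunction $\hom_\C(A \otimes B , C) \simeq \hom_\C(A , \enrhom_\C(B,C))$.

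For the forward direction, assume $\leftloc\C \subset \C$ is an exponential ideal, and fix $W \in \leftloc\C$. I would show that $\hom_\C(\eta_Y \otimes \eta_Z , W)$ is an equivalence by factoring the map $\eta_Y \otimes \eta_Z : Y \otimes Z \to \leftloc(Y) \otimes \leftloc(Z)$ as $(\eta_Y \otimes \id) \circ (\id \otimes \eta_Z)$, and testing the two factors separately. For the first factor, applying $\hom_\C(-,W)$ and then tensor-hom converts the question into whether $\hom_\C(\eta_Y , \enrhom_\C(Z,W))$ is an equivalence; the exponential ideal hypothesis says $\enrhom_\C(Z,W) \in \leftloc\C$, so this holds. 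The second factor is handled identically. Concluding that $\eta_Y \otimes \eta_Z$ is a $\leftloc$-weak equivalence, the natural transformation $\leftloc(\eta \otimes \eta)$ is then an equivalence by definition of $\leftloc$.

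For the backward direction, assume $\leftloc(\eta \otimes \eta)$ is an equivalence. To show $\enrhom_\C(Y,W) \in \leftloc\C$ for $Y \in \C$ and $W \in \leftloc\C$, I would check that for any $\leftloc$-weak equivalence $f : A \to B$, the map $\hom_\C(f , \enrhom_\C(Y,W))$ is an equivalence. By tensor-hom this is $\hom_\C(f \otimes Y , W)$, and since $W \in \leftloc\C$, it equals $\hom_\C(\leftloc(f \otimes Y) , W)$. The hypothesis, applied in the evident naturality square, identifies $\leftloc(f \otimes Y)$ with $\leftloc(\leftloc(f) \otimes \leftloc(Y))$, and $\leftloc(f)$ is already an equivalence by assumption on $f$, so we are done.

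The final clause is immediate: if $\leftloc\C$ is closed under $\otimes$, then $\leftloc(Y) \otimes \leftloc(Z) \in \leftloc\C$ is already $\leftloc$-local, so $\eta$ on it is an equivalence, and the displayed formula drops out of the preceding criterion. I do not expect any substantive obstacle here; the only care needed is the usual bookkeeping in closed symmetric monoidal $\infty$-categories to ensure that the composite of tensor-hom equivalences and unit maps really does represent $\leftloc(\eta \otimes \eta)$ naturally in $\Fun(\C \times \C , \C)$, and not merely pointwise in $Y$ and $Z$ — this functoriality is automatic since every step is built from adjunction units and counits.
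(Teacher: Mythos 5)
Your proposal is correct and takes essentially the same approach as the paper: both directions reduce to the tensor-hom adjunction combined with the Yoneda-style criterion that an object is $\leftloc$-local iff it sends $\leftloc$-weak equivalences to equivalences of mapping spaces. Your organization is a touch cleaner — factoring $\eta_Y \otimes \eta_Z$ into $(\eta_Y \otimes \id) \circ (\id \otimes \eta_Z)$ in the forward direction, and testing $\enrhom_\C(Y,W)$ directly against an arbitrary weak equivalence $f$ in the backward direction — whereas the paper writes each direction as a single long chain of natural equivalences, but the underlying ideas are the same.
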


\begin{proof}
Suppose that $\leftloc\C \subset \C$ is an exponential ideal.  Then, for any $Y,Z \in \C$ and any test object $W \in \leftloc\C$, we have the string of natural equivalences
\begin{align*}
\hom_\C(\leftloc(Y \otimes Z) , W)
& \simeq \hom_\C(Y \otimes Z, W)
\simeq \hom_\C(Y , \enrhom_\C(Z,W))
\simeq \hom_\C(\leftloc(Y),\enrhom_\C(Z,W))
\\
& \simeq \hom_\C(\leftloc(Y) \otimes Z , W)
\simeq \hom_\C(Z \otimes \leftloc(Y) , W)
\simeq \hom_\C(Z , \enrhom_\C(\leftloc(Y),W))
\\
& \simeq \hom_\C(\leftloc(Z) , \enrhom_\C(\leftloc(Y),W))
\simeq \hom_\C(\leftloc(Z) \otimes \leftloc(Y) , W)
\\
& \simeq \hom_\C(\leftloc(Y) \otimes \leftloc(Z) , W)
\simeq \hom_\C(\leftloc(\leftloc(Y) \otimes \leftloc(Z)) , W) .
\end{align*}
Hence, we have an equivalence $\leftloc(Y \otimes Z) \simeq \leftloc(\leftloc(Y) \otimes \leftloc(Z))$ by the Yoneda lemma applied to the $\infty$-category $\leftloc\C$ (and it is straightforward to check that this equivalence is indeed induced by the specified map).  So $\leftloc(\eta \otimes \eta)$ is an equivalence in $\Fun(\C \times \C, \C)$, as desired.

On the other hand, suppose that $\leftloc(Y \otimes Z) \xra{\sim}\leftloc(\leftloc(Y) \otimes \leftloc(Z))$ for all $Y, Z \in \C$.  Then, we have the string of natural equivalences
\begin{align*}
\hom_\C(Y , \enrhom_\C(Z,W))
& \simeq \hom_\C(Y \otimes Z , W)
\simeq \hom_\C(\leftloc(Y \otimes Z),W)
\simeq \hom_\C(\leftloc(\leftloc(Y) \otimes \leftloc(Z)) , W)
\\
& \simeq \hom_\C(\leftloc(Y) \otimes \leftloc(Z) , W)
\simeq \hom_\C(\leftloc(\leftloc(Y)) \otimes \leftloc(Z), W )
\\
& \simeq \hom_\C(\leftloc(\leftloc(\leftloc(Y)) \otimes \leftloc(Z)) , W)
\simeq \hom_\C(\leftloc(\leftloc(Y) \otimes Z) , W)
\\
& \simeq \hom_\C(\leftloc(Y) \otimes Z , W)
\simeq \hom_\C(\leftloc(Y) , \enrhom_\C(Z,W)) .
\end{align*}
Hence, for any map $Y \ra Y'$ in $\C$ which localizes to an equivalence $\leftloc(Y) \xra{\sim} \leftloc(Y')$ in $\leftloc\C \subset \C$, we obtain an equivalence $\hom_\C(Y,\enrhom_\C(Z,W)) \xla{\sim} \hom_\C(Y',\enrhom_\C(Z,W))$.  It follows that the object $\enrhom_\C(Z,W) \in \C$ is local with respect to the left localization, i.e.\! that in fact $\enrhom_\C(Z,W) \in \leftloc\C \subset \C$.  So $\leftloc\C \subset \C$ is an exponential ideal.
\end{proof}

With \cref{exp ideals and monoidal products} in hand, we now proceed to prove \cref{localization preserves finite products}.

\begin{proof}[Proof of \cref{localization preserves finite products}]
The right adjoint $\min : \Cati \ra \RelCati$ induces an equivalence onto the full subcategory of minimal relative $\infty$-categories.  It is easy to see that this is an exponential ideal in $(\RelCati,\times)$, and so the result follows from \cref{exp ideals and monoidal products}.
\end{proof}

The following useful construction relies on \cref{localization preserves finite products}.

\begin{rem}\label{map from rel fctr cat to fctrs betw localizns}
Let $(\R_1,\bW_1) , (\R_2,\bW_2) \in \RelCati$.  Then the identity map
\[ \left( \Fun(\R_1,\R_2)^\Rel , \Fun(\R_1,\R_2)^\bW \right) \ra \left( \Fun(\R_1,\R_2)^\Rel , \Fun(\R_1,\R_2)^\bW \right) \]
is adjoint to an evaluation map
\[ (\R_1,\bW_1) \times \left( \Fun(\R_1,\R_2)^\Rel , \Fun(\R_1,\R_2)^\bW \right) \ra (\R_2,\bW_2) . \]
By \cref{localization preserves finite products}, applying the localization functor $\RelCati \xra{\locL} \Cati$ yields a map
\[ \loc{\R_1}{\bW_1} \times \loc{\Fun(\R_1,\R_2)^\Rel}{\left( \Fun(\R_1,\R_2)^\bW \right)} \ra \loc{\R_2}{\bW_2} , \]
which is itself adjoint to a canonical map
\[ \loc{\Fun(\R_1,\R_2)^\Rel}{\left( \Fun(\R_1,\R_2)^\bW \right)} \ra \Fun(\loc{\R_1}{\bW_1} , \loc{\R_2}{\bW_2} ). \]
In particular, precomposing with the localization map for the internal hom-object yields a canonical map
\[ \Fun(\R_1,\R_2)^\Rel \ra \Fun(\loc{\R_1}{\bW_1} , \loc{\R_2}{\bW_2} ) . \]
\end{rem}

\cref{localization preserves finite products} also allows us to prove the following result, which will be useful later and which gives a sense of the interplay between relative $\infty$-categories and their localizations.

\begin{lem}\label{nat w.e. induces equivce betw fctrs}
Given any $(\R_1\bW_1),(\R_2,\bW_2) \in \RelCati$ and any pair of maps $\R_1 \rra \R_2$ in $\RelCati$, a natural weak equivalence between them induces an equivalence between their induced functors $\loc{\R_1}{\bW_1} \rra \loc{\R_2}{\bW_2}$ in $\Cati$.
\end{lem}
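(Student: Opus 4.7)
The plan is to reformulate the hypothesis as a single morphism out of a product relative $\infty$-category and then invoke the two earlier results \cref{localization preserves finite products} and \cref{ex maximal localization}.

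First, by the internal hom adjunction of \cref{define internal hom in relcats}, a natural weak equivalence between two maps $F, G : (\R_1,\bW_1) \rra (\R_2,\bW_2)$ is the same data as a single morphism
\[ H : [1]_\bW \times (\R_1,\bW_1) \to (\R_2,\bW_2) \]
in $\RelCati$ whose restrictions along the endpoint inclusions $\{0\}, \{1\} : \pt \to [1]_\bW$ recover $F$ and $G$ respectively.  Here it is essential to use $[1]_\bW = \max([1])$ rather than $\min([1])$, so that the unique non-identity morphism of $[1]$ is required to be sent to a weak equivalence of $\R_2$ upon evaluation at every object of $\R_1$.

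Next, I would apply the localization functor $\locL$ to $H$.  By \cref{localization preserves finite products}, this yields a morphism
\[ \loc{H}{} : \loc{[1]_\bW}{} \times \loc{\R_1}{\bW_1} \to \loc{\R_2}{\bW_2} \]
in $\Cati$; and by \cref{ex maximal localization}, $\loc{[1]_\bW}{} \simeq [1]^\gpd$, the groupoid completion of $[1]$, which is a contractible $\infty$-groupoid.  Consequently the two inclusions $\{0\}, \{1\} : \pt \rra [1]^\gpd$ are equivalent in $\Cati$ (both being inverse to the unique equivalence $[1]^\gpd \xra{\sim} \pt$).  By naturality of localization, precomposing $\loc{H}{}$ with $\{i\} \times \id_{\loc{\R_1}{\bW_1}}$ produces $\loc{F}{}$ when $i=0$ and $\loc{G}{}$ when $i=1$, so we obtain the desired equivalence $\loc{F}{} \simeq \loc{G}{}$ in $\Fun(\loc{\R_1}{\bW_1},\loc{\R_2}{\bW_2})$.

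The argument is essentially a direct deduction from the two cited lemmas, so I do not anticipate any serious obstacle.  The only mildly subtle piece is the opening reformulation --- recognizing a natural weak equivalence as a morphism out of the product $[1]_\bW \times (\R_1,\bW_1)$ --- and even that is just an instance of the internal hom adjunction.
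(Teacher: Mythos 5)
Your proof is correct and follows exactly the same route as the paper's: encode the natural weak equivalence as a map $[1]_\bW \times \R_1 \to \R_2$ in $\RelCati$, apply \cref{localization preserves finite products} and \cref{ex maximal localization} to get a map $[1]^\gpd \times \loc{\R_1}{\bW_1} \to \loc{\R_2}{\bW_2}$, and observe that this selects the required equivalence. Your write-up merely spells out a bit more explicitly why $[1]^\gpd$ being contractible forces the two endpoint restrictions to agree.
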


\begin{proof}
A natural weak equivalence corresponds to a map $[1]_\bW \times \R_1 \ra \R_2$ in $\RelCati$.  By \cref{localization preserves finite products} (and \cref{ex maximal localization}), this gives rise to a map $[1]^\gpd \times \loc{\R_1}{\bW_1} \ra \loc{\R_2}{\bW_2}$ in $\Cati$, which precisely selects the desired equivalence.
\end{proof}

\begin{rem}\label{rem re-prove that a left localization is a free localization}
\cref{nat w.e. induces equivce betw fctrs} allows for a simple proof of Proposition T.5.2.7.12, that a left localization is in particular a free localization.  Indeed, given a left localization adjunction $\leftloc : \C \adjarr \leftloc\C : \forget$, write $\bW \subset \C$ for the subcategory created by the functor $\leftloc : \C \ra \leftloc\C$.  Then, this adjunction gives rise to a pair of maps $(\C,\bW) \xra{\leftloc} \min(\leftloc\C)$ and $\min(\leftloc\C) \xra{\forget} (\C,\bW)$ in $\RelCati$.  Moreover, the composite
\[ \min(\leftloc\C) \xra{\forget} (\C,\bW) \xra{\leftloc} \min(\leftloc\C) \]
is an equivalence, while the composite
\[ (\C,\bW) \xra{\leftloc} \min(\leftloc\C) \xra{\forget} (\C,\bW) \]
is connected to $\id_{(\C,\bW)}$ by the unit of the natural transformation, which is a componentwise weak equivalence (since for any $Y \in \C$, applying $\C \xra{\leftloc} \leftloc\C$ to the map $Y \ra \leftloc(Y)$ gives an equivalence $\leftloc(Y) \xra{\sim} \leftloc(\leftloc(Y))$).  Hence, it follows that these functors induce inverse equivalences $\loc{\C}{\bW} \simeq \leftloc\C$.  (From here, one can obtain the actual statement of Proposition T.5.2.7.12 by appealing to \cref{localization induces full mono of infty-cats}.)
\end{rem}

\cref{nat w.e. induces equivce betw fctrs} also has the following special case which will be useful to us.

\begin{lem}\label{nat trans induces equivce betw maps on gpd-complns}
Given any $\C,\D \in \Cati$ and any pair of maps $\C \rra \D$, a natural transformation between them induces an equivalence between the induced maps $\C^\gpd \rra \D^\gpd$ in $\S$.
\end{lem}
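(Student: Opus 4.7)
The plan is to deduce this directly from \cref{nat w.e. induces equivce betw fctrs} by equipping both $\C$ and $\D$ with their \emph{maximal} relative $\infty$-category structures, as provided by \cref{trivial relative infty-category structures}.

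More precisely, consider $\max(\C) = (\C,\C)$ and $\max(\D) = (\D,\D)$ as objects of $\RelCati$. Since every morphism of $\C$ lies in the weak equivalences of $\max(\C)$ and similarly for $\D$, every functor $\C \to \D$ automatically lifts to a map $\max(\C) \to \max(\D)$ in $\RelCati$, and moreover every natural transformation between two such functors is automatically a natural weak equivalence in the sense of \cref{define internal hom in relcats}. So the data of the hypothesized natural transformation between two maps $\C \rightrightarrows \D$ yields, tautologically, a natural weak equivalence between the corresponding pair of maps $\max(\C) \rightrightarrows \max(\D)$ in $\RelCati$.

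Applying \cref{nat w.e. induces equivce betw fctrs}, we conclude that the two induced functors on localizations $\loc{\max(\C)}{} \rightrightarrows \loc{\max(\D)}{}$ are equivalent in $\Cati$. By \cref{ex maximal localization}, the localization of a maximal relative $\infty$-category is precisely the groupoid completion, so these induced functors are exactly the two maps $\C^\gpd \rightrightarrows \D^\gpd$ in question (which land in $\S \subset \Cati$).

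There is essentially no obstacle here: the entire content consists of recognizing that $\max : \Cati \to \RelCati$ turns natural transformations into natural weak equivalences, and that the localization of $\max(-)$ recovers $(-)^\gpd$. Both facts have already been established in the preceding material, so the proof is a one-line appeal to \cref{nat w.e. induces equivce betw fctrs} together with \cref{ex maximal localization}.
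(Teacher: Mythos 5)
Your proof is correct and coincides with the paper's own argument: both apply \cref{nat w.e. induces equivce betw fctrs} with $(\R_1,\bW_1) = \max(\C)$ and $(\R_2,\bW_2) = \max(\D)$ and invoke \cref{ex maximal localization} to identify the localizations with groupoid completions.
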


\begin{proof}
In light of \cref{ex maximal localization}, this follows from applying \cref{nat w.e. induces equivce betw fctrs} in the special case that $(\R_1,\bW_1) = \max(\C)$ and $(\R_2,\bW_2) = \max(\D)$.
\end{proof}

\begin{rem}
\cref{nat trans induces equivce betw maps on gpd-complns} can also be seen as following from applying \cref{exp ideals and monoidal products} to the left localization $(-)^\gpd : \Cati \adjarr \S : \forget_\S$.  Namely, since the full subcategory $\S \subset \Cati$ is an exponential ideal for $(\Cati,\times)$, then the left adjoint $(-)^\gpd : \Cati \ra \S$ commutes with finite products, and hence a natural transformation $[1] \times \C \ra \D$ gives rise to a map $([1] \times \C)^\gpd \simeq [1]^\gpd \times \C^\gpd \ra \D^\gpd$ which selects the desired equivalence in $\hom_\S(\C^\gpd,\D^\gpd)$.
\end{rem}

In turn, \cref{nat trans induces equivce betw maps on gpd-complns} has the following useful further special case.

\begin{cor}\label{adjns induce equivces on gpd-complns}
An adjunction $F : \C \adjarr \D : G$ induces inverse equivalences $F^\gpd : \C^\gpd \xra{\sim} \D^\gpd$ and $\C^\gpd \xla{\sim} \D^\gpd : G^\gpd$ in $\S$.
\end{cor}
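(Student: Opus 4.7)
The plan is to apply \cref{nat trans induces equivce betw maps on gpd-complns} to the unit and counit of the adjunction. An adjunction $F : \C \adjarr \D : G$ comes equipped with a unit natural transformation $\id_\C \Rightarrow GF$ in $\Fun(\C,\C)$ and a counit natural transformation $FG \Rightarrow \id_\D$ in $\Fun(\D,\D)$.

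First I would invoke the functoriality of the groupoid-completion functor $(-)^\gpd : \Cati \ra \S$: since it is a functor, it preserves composition, so $(GF)^\gpd \simeq G^\gpd F^\gpd$ and $(FG)^\gpd \simeq F^\gpd G^\gpd$, and of course $(\id_\C)^\gpd \simeq \id_{\C^\gpd}$ and $(\id_\D)^\gpd \simeq \id_{\D^\gpd}$. Then applying \cref{nat trans induces equivce betw maps on gpd-complns} to the unit yields an equivalence $\id_{\C^\gpd} \simeq G^\gpd \circ F^\gpd$ in $\S$, and applying it to the counit yields an equivalence $F^\gpd \circ G^\gpd \simeq \id_{\D^\gpd}$ in $\S$. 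This exhibits $F^\gpd$ and $G^\gpd$ as mutually inverse equivalences.

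There is no real obstacle here; the corollary is essentially a direct packaging of the previous lemma applied to the two structure natural transformations of an adjunction. The only minor point to be careful about is to observe that natural transformations in $\Cati$ indeed survive to natural transformations (in fact equivalences) after applying $(-)^\gpd$, which is precisely the content of \cref{nat trans induces equivce betw maps on gpd-complns}.
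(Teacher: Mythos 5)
Your proof is correct and is exactly the approach the paper takes: apply \cref{nat trans induces equivce betw maps on gpd-complns} to the unit and counit natural transformations. The paper's own proof is just a terser version of what you wrote, leaving the functoriality bookkeeping implicit.
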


\begin{proof}
The adjunction $F \adj G$ has unit and counit natural transformations $\id_\C \ra G \circ F$  and $F \circ G \ra \id_\D$, and so the claim follows from \cref{nat trans induces equivce betw maps on gpd-complns}.
\end{proof}

We note the following interaction between taking localizations and taking homotopy categories.

\begin{rem}\label{loc and ho commute}
Observe that the composite left adjoint
\[ \RelCati \xra{(\ho(-),\ho(-))} \RelCat \xra{(-)[(-)^{-1}]} \Cat \]
coincides with the composite left adjoint
\[ \RelCati \xra{\loc{(-)}{(-)}} \Cati \xra{\ho} \Cat , \]
since they share a right adjoint
\[ \RelCati \hookla \RelCat \xla{\min} \Cat . \]
Hence, for any $(\R,\bW) \in \RelCati$ we have a natural equivalence
\[ \ho(\loc{\R}{\bW}) \xra{\sim} \ho(\R)[\ho(\bW)^{-1}] \]
in $\Cat \subset \Cati$.
\end{rem}

We end this section with the following observation (which partly echoes \cref{sspaces:model structure on homotopy category}).

\begin{rem}
Suppose $(\R,\bW)$ is a relative $\infty$-category.  Then $(\ho(\R),\ho(\bW))$ is a relative category (so is in particular a relative $\infty$-category).  However, its localization $\loc{\ho(\R)}{\ho(\bW)}$ need not recover $\loc{\R}{\bW}$.  This is for the same reason as such facts always are, namely that we lose coherence data when we pass from $\R$ to $\ho(\R)$.  (Commutative diagrams in $\ho(\R)$ need not come from commutative diagrams in $\R$, and when they do they might do so in multiple, inequivalent ways.)  An explicit counterexample is provided by the minimal relative $\infty$-category $(\R,\bW) = (\R,\R^\simeq)$: then
\[ \ho(\bW) \simeq \ho(\R^\simeq) \simeq \ho(\R)^\simeq \subset \ho(\R) \]
since the equivalences in $\R$ are created by $\R \ra \ho(\R)$, and hence $\loc{\ho(\R)}{\ho(\bW)} \simeq \ho(\R)$ (while of course $\loc{\R}{\bW} \simeq \R$).  One might therefore refer to the $\infty$-category $\loc{\ho(\R)}{\ho(\bW)}$ as an ``exotic enrichment'' of the homotopy category $\ho(\loc{\R}{\bW})$.
\end{rem}

\section{Complete Segal spaces}\label{section CSSs}

We now give an extremely brief review of the theory of complete Segal spaces.  This section exists more-or-less solely to fix notation; we refer the reader seeking a more thorough discussion either to the original paper \cite{RezkCSS} (which uses model categories) or to \cite[\sec 1]{LurieGoo} (which uses $\infty$-categories).

Let us write $\bD \xra{[\bullet]} \Cat$ for the standard cosimplicial category.  Then, recall that the \textit{nerve} of a category $\C$ is the simplicial set $\Nerve(\C)_\bullet = \hom^\lw_\Cat([\bullet],\C)$.  This defines a fully faithful embedding $\Nerve : \Cat \ra s\Set$, with image those simplicial sets which admit \textit{unique} lifts for the inner horn inclusions $\{ \Lambda^n_i \ra \Delta^n \}_{0 < i < n \geq 0}$.  In fact, this functor is a right adjoint.

The situation with $\infty$-categories is completely analogous.

\begin{defn}\label{define CSSs}
The ($\infty$-categorical) \bit{nerve} of an $\infty$-category $\C$ is the simplicial space
\[ \Nervei(\C)_\bullet = \hom^\lw_{\Cati}([\bullet],\C) , \]
i.e.\! the composite
\[ \bD^{op} \xra{[\bullet]^{op}} \Cat^{op} \hookra (\Cati)^{op} \xra{\hom_{\Cati}(-,\C)} \S . \]
This defines a fully faithful embedding $\Nervei : \Cati \hookra s\S$, with image the full subcategory $\CSS \subset s\S$ of \bit{complete Segal spaces}, i.e.\! those simplicial spaces satisfying the \textit{Segal condition} and the \textit{completeness condition}.  This inclusion fits into a left localization adjunction $\leftloc_\CSS : s\S \adjarr \CSS : \forget_\CSS$.  Hence, we obtain an equivalence
\[ \Cati \xra[\sim]{\Nervei} \CSS , \]
whose inverse
\[ \CSS \xra[\sim]{\Nervei^{-1}} \Cati \]
takes an object $Y_\bullet \in \CSS$ to the coend
\[ \int^{[n] \in \bD} Y_n \times [n] \]
in $\Cati$.  (These claims respectively follow from Proposition A.A.7.10], \cite[Theorem 4.12]{JT}, \cite[Theorem 7.2]{RezkCSS}, and \cite[Theorem 4.12]{JT} again.)  This equivalence identifies subcategory $\S \subset \Cati$ with the subcategory of \textit{constant} simplicial spaces (which are automatically complete Segal spaces).
\end{defn}

\begin{rem}\label{extract hom-spaces from a CSS}
Complete Segal spaces provide an extremely efficient way of computing the hom-spaces in an $\infty$-category: if $x,y \in \C$, then there is a natural equivalence
\[ \hom_\C(x,y) \simeq \lim \left( \begin{tikzcd}
 & \Nervei(\C)_1 \arrow{d}{(s,t)} \\
 \pt_\S \arrow{r}[swap]{(x,y)} & \Nervei(\C)_0 \times \Nervei(\C)_0
\end{tikzcd} \right) \]
in $\S$, where we use the notations $s = \delta_1$ and $t = \delta_0$ to emphasize the roles that these two face maps play in this theory.  (Note that $\Nervei(\C)_0 = \hom_\Cati([0],\C) \simeq \C^\simeq$ is simply the maximal subgroupoid of $\C$, while $\Nervei(\C)_1 = \hom_\Cati([1],\C) \simeq \Fun([1],\C)^\simeq$ is the space morphisms in $\C$.)
\end{rem}

\begin{rem}\label{op on CSS}
There is a canonical involution $\bD \xra{\sim} \bD$ in $\strcat$, which is the identity on objects but acts on morphisms by ``reversing the coordinates'': a map $[m] \xra{\varphi} [n]$ is taken to the map
\[ [m] \xra{i \mapsto (n - \varphi(m-i))} [n] . \]
Taking opposites, this induces an involution $\bD^{op} \xra{\sim} \bD^{op}$, which in turn induces an involution of $s\S = \Fun(\bD^{op} , \S)$ by precomposition.  Unwinding the definitions, we see that this involution $s\S \xra{\sim} s\S$ restricts to an involution $\CSS \xra{\sim} \CSS$ which corresponds to the involution $(-)^{op} : \Cati \xra{\sim} \Cati$.
\end{rem}

For future use, we record the following observation.

\begin{prop}\label{groupoid-completion of CSSs}
The diagram
\[ \begin{tikzcd}[column sep=2cm, row sep=1.5cm]
s\S
\arrow[transform canvas={yshift=0.7ex}]{r}{\leftloc_\CSS}[swap, transform canvas={yshift=0.2ex}]{\scriptstyle \bot} \arrow[transform canvas={yshift=-0.7ex}, hookleftarrow]{r}[swap]{\forget_\CSS} \arrow{rd}[swap, sloped, pos=0.6]{|{-}|}
& \CSS 
{\arrow[transform canvas={yshift=0.7ex}]{r}{\Nervei^{-1}}[swap, transform canvas={yshift=0.05ex}]{\sim} \arrow[transform canvas={yshift=-0.7ex}, leftarrow]{r}[swap]{\Nervei}}
& \Cati \arrow{ld}[sloped, pos=0.6]{(-)^\gpd} \\
& \S
\end{tikzcd} \]
commutes: that is,
\begin{itemizesmall}
\item geometric realization of complete Segal spaces models groupoid completion of $\infty$-categories, and
\item for any $Y \in s\S$, the localization map $Y \ra \leftloc_\CSS(Y)$ becomes an equivalence upon geometric realization.
\end{itemizesmall}
\end{prop}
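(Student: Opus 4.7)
The plan is to identify the geometric realization functor $|{-}|: s\S \to \S$ with the composite $(-)^\gpd \circ \Nervei^{-1} \circ \leftloc_\CSS$, after which both bullet points drop out immediately. The argument is entirely formal, requiring no model-categorical input beyond the identification of $\S$ with constant simplicial spaces that is already recorded in \cref{define CSSs}.

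First, recall that $|{-}|: s\S \to \S$ is by definition the left adjoint to the constant-diagram functor $\mathrm{const}: \S \to s\S$. The last sentence of \cref{define CSSs} asserts that constant simplicial spaces are complete Segal spaces, and that $\Nervei^{-1}: \CSS \xra{\sim} \Cati$ identifies them with the full subcategory $\S \subset \Cati$. In other words, $\mathrm{const}$ factors as the composite
\[ \S \xra{\forget_\S} \Cati \xra[\sim]{\Nervei} \CSS \xra{\forget_\CSS} s\S . \]
Passing to left adjoints---namely $(-)^\gpd$, $\Nervei^{-1}$, and $\leftloc_\CSS$, respectively---yields the desired identification $|{-}| \simeq (-)^\gpd \circ \Nervei^{-1} \circ \leftloc_\CSS$ of functors $s\S \to \S$.

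From this identification, both bullet points read off at once. For the first, given $\C \in \Cati$ the simplicial space $\Nervei(\C)$ is already a complete Segal space, so $\leftloc_\CSS(\Nervei(\C)) \simeq \Nervei(\C)$, giving $|\Nervei(\C)| \simeq \C^\gpd$ as required. For the second, since $\leftloc_\CSS$ is a localization, applying it to the unit $Y \to \leftloc_\CSS(Y)$ produces an equivalence $\leftloc_\CSS(Y) \xra{\sim} \leftloc_\CSS(\leftloc_\CSS(Y))$ in $\CSS$; composing with $(-)^\gpd \circ \Nervei^{-1}$ then shows that $|{-}|$ inverts the unit map. The only point of substance in the whole argument is the factorization of $\mathrm{const}$ through $\forget_\CSS$, and I do not anticipate any genuine obstacle beyond citing the relevant sentence of \cref{define CSSs}.
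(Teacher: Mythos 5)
Your proof is correct and follows essentially the same route as the paper: both hinge on the factorization $\const \simeq \forget_\CSS \circ \Nervei \circ \forget_\S$ of the constant-diagram functor through $\CSS$, followed by uniqueness of left adjoints. Your packaging is marginally tighter — you extract the single identification $|{-}| \simeq (-)^\gpd \circ \Nervei^{-1} \circ \leftloc_\CSS$ once and read off both bullet points from it, whereas the paper invokes uniqueness of left adjoints twice, once per bullet — but the underlying content is identical.
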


\begin{proof}
For the first claim, note that the functor $(-)^\gpd : \Cati \ra \S$ is a left localization, and the composite
\[ \S \xhookra{\forget_\S} \Cati \xra[\sim]{\Nervei} \CSS \xhookra{\forget_\CSS} s\S \]
agrees with the functor $\const : \S \ra s\S$.  Hence, the equivalence
\[ |{-}| \circ \forget_\CSS \circ \Nervei \simeq (-)^\gpd \]
in $\Fun(\Cati,\S)$ follows from the uniqueness of left adjoints.

For the second claim, note that the reflective inclusion $\const : \S \hookra s\S$ factors through the reflective inclusion $\forget_\CSS : \CSS \hookra s\S$.  Hence, the factorization $\S \hookra \CSS$ is also a reflective inclusion.  The equivalence
\[ |{-}| \simeq |{-}| \circ \forget_\CSS \circ \leftloc_\CSS \]
in $\Fun(s\S,\S)$ now also follows from the uniqueness of left adjoints.
\end{proof}

\begin{rem}
We may interpret \cref{groupoid-completion of CSSs} as saying that, while a simplicial space $Y \in s\S$ can be thought of as generating an $\infty$-category (namely the one corresponding to $\leftloc_\CSS(Y) \in \CSS$), we can already directly extract its groupoid completion from $Y$ itself.  This is analogous to the fact that an arbitrary simplicial set can be thought of as generating a quasicategory via fibrant replacement in $s\Set_\Joyal$, and the replacement map lies in $\bW_\Joyal \subset \bW_\KQ$ (i.e.\! it induces an equivalence on geometric realizations).
\end{rem}

\begin{rem}\label{nerve vs nervei}
Given a strict category $\C \in \strcat$, the maps $\hom_\strcatsup([n],\C) \ra \hom_\Cati([n],\C)$ from hom-sets to hom-spaces collect into a map
\[ \Nerve(\C) \ra \Nervei(\C) \]
in $s\S$; in turn, these maps assemble into a natural transformation $\Nerve \ra \Nervei$ in $\Fun(\strcat,s\S)$.  This map will be an equivalence in $s\S$ if and only if $\C$ is \textit{gaunt}: while the nerve $\Nerve(\C) \in s\Set \subset s\S$ is always a Segal space, it only satisfies the completeness condition when every isomorphism in $\C$ is actually an identity map.\footnote{Note that the Segal condition in $s\Set$ can be equivalently checked in $s\S$ since the inclusion $s\Set \subset s\S$ is a right adjoint.}  However, by \cite[Remark 7.8]{RezkCSS}, the above map induces an equivalence
\[ \leftloc_\CSS(\Nerve(\C)) \xra{\sim} \leftloc_\CSS(\Nervei(\C)) \simeq \Nervei(\C) \]
in $\CSS \subset s\S$.  In particular, it therefore follows from \cref{groupoid-completion of CSSs} that it also induces an equivalence
\[ |\Nerve(\C)| \xra{\sim} |\Nervei(\C)| \]
in $\S$.
\end{rem}

\section{The Rezk nerve}\label{section rezk nerve}

Recall that the \textit{localization} of a relative $\infty$-category $(\R,\bW)$ is the initial $\infty$-category $\loc{\R}{\bW}$ equipped with a functor from $\R$ which sends the subcategory $\bW \subset \R$ of weak equivalences to equivalences.  Meanwhile, given an arbitrary $\infty$-category $\C$, observe that the $n\th$ space of its nerve can be considered as
\[ \Nervei(\C)_n = \hom_{\Cati}([n],\C) \simeq \Fun( [n] , \C)^\simeq \subset \Fun([n],\C) , \]
the subcategory of $\Fun([n],\C)$ whose morphisms are the natural \textit{equivalences}.  Putting these two facts together, one is led to suspect that the $n\th$ space of the nerve $\Nervei(\loc{\R}{\bW})_\bullet$ should somehow contain the subcategory
\[ \Fun([n],\R)^\bW \subset \Fun([n],\R) \]
of $\Fun([n],\R)$ whose morphisms are the natural \textit{weak equivalences}.  Of course, this will not generally form a space, but will instead be an $\infty$-category.  On the other hand, there is a universal choice for a space admitting a map from this $\infty$-category, namely its groupoid completion.  We are thus naturally led to make the following construction, a direct generalization of the ``classification diagram'' construction for relative categories defined in \cite[3.3]{RezkCSS}.

\begin{defn}\label{define infty-categorical rezk pre-nerve and rezk nerve}
Given a relative $\infty$-category $(\R,\bW)$, its ($\infty$-categorical) \bit{Rezk pre-nerve} is the simplicial $\infty$-category
\[ \preNerveRezki(\R,\bW)_\bullet = \Fun^\lw([\bullet],\R)^\bW ,\]
i.e.\! the composite
\[ \bD^{op} \xra{[\bullet]^{op}} \Cat^{op} \hookra (\Cati)^{op} \xra{\min^{op}} (\RelCati)^{op} \xra{\Fun(-,\R)^\bW} \Cati . \]
This defines a functor
\[ \RelCati \xra{\preNerveRezki} s\Cati . \]
Then, the ($\infty$-categorical) \bit{Rezk nerve} functor
\[ \RelCati \xra{\NerveRezki} s\S \]
is given by the composite
\[ \RelCati \xra{\preNerveRezki} s\Cati \xra{s(-)^\gpd} s\S . \]
\end{defn}

\begin{rem}\label{infty-catl rezk nerve agrees with 1-catl rezk nerve}
Recall that Rezk's ``classification diagram'' construction of \cite[3.3]{RezkCSS}, which we will denote by
\[ \strrelcat \xra{\NerveRezk} s(s\Set) \]
and refer to as the \textit{1-categorical Rezk nerve} functor, is given by the formula
\[ \NerveRezk(\R,\bW)_\bullet = \Nerve \left( \strfun^\lw([\bullet],\R)^\bW \right) . \]
Of course, we would like to think of this as a simplicial space using the model category $s(s\Set_\KQ)_\Reedy$.  Indeed, combining \cref{groupoid-completion of CSSs} and \cref{nerve vs nervei}, we obtain a canonical commutative diagram
\[ \begin{tikzcd}
\strrelcat \arrow{d} \arrow{r}{\NerveRezk} & s(s\Set) \arrow{r}{s(|{-}|)} & s\S \\
\RelCati \arrow[bend right=10]{rru}[swap, sloped, pos=0.4]{\NerveRezki}
\end{tikzcd} \]
in $\Cati$; in fact, this even refines to a canonical commutative diagram
\[ \begin{tikzcd}
\strrelcat \arrow[hook]{d} \arrow{r}{\NerveRezk} & s(s\Set) \arrow{r} & s\Cati \arrow{r}{s(-)^\gpd} & s\S \\
\RelCati \arrow{rru}[swap, sloped, pos=0.35]{\preNerveRezki} \arrow[bend right=15]{rrru}[swap, sloped, pos=0.45]{\NerveRezki}
\end{tikzcd} \]
in $\Cati$ (in which the functor $s(s\Set) \ra s\Cati$ is obtained by applying $s(-) = \Fun(\bD^{op},-)$ to the localization $s\Set \ra \loc{s\Set}{\bW_\Joyal} \simeq \Cati$).  Thus, at least as far as homotopical content is concerned, the $\infty$-categorical Rezk nerve functor strictly generalizes its 1-categorical counterpart.
\end{rem}

\begin{rem}\label{rezk nerve for marked qcats}
In turn, the 1-categorical Rezk nerve functor of \cref{infty-catl rezk nerve agrees with 1-catl rezk nerve} suggests a similar model-dependent definition of a Rezk nerve functor for ``marked quasicategories'' (once again landing in $ss\Set$).  In fact, as the first step in the proof of \cref{natural mono}, we will show that this construction is a model-categorical presentation
\begin{itemizesmall}
\item of the $\infty$-categorical Rezk nerve when considered in $s(s\Set_\KQ)_\Reedy$, and in fact
\item of the $\infty$-categorical Rezk pre-nerve when considered in $s(s\Set_\Joyal)_\Reedy$.
\end{itemizesmall}
\end{rem}

\begin{rem}\label{pre-nerve as sCSS}
We have the following slight reformulation of \cref{define infty-categorical rezk pre-nerve and rezk nerve}: in view of \cref{groupoid-completion of CSSs}, the Rezk nerve functor can also be described as a composite
\[ \RelCati \xra{\preNerveRezki} s\Cati \simeq s\CSS \xhookra{s(\forget_\CSS)} s(s\S) \xra{s(|{-}|)} s\S . \]
Note that the composite functor $\RelCati \ra s(s\S)$ is a right adjoint, whose left adjoint is the left Kan extension
\[ \begin{tikzcd}[column sep=5cm]
\bD \times \bD \arrow{r}{\mxm = (([m],[n]) \mapsto [m] \times [n]_\bW)} \arrow{d}[swap]{\Yo} & \RelCati \\
s(s\S) \arrow[dashed]{ru}[sloped, swap, pos=0.35]{\Yo_!(\mxm)}
\end{tikzcd} \]
along the Yoneda embedding, where we write $\mxm$ for the upper ``$\min \times \max$'' functor for brevity.  On the other hand, the functor $s(|{-}|) : s(s\S) \ra s\S$ is a left adjoint.  Hence, as the Rezk nerve functor is the composite of a right adjoint followed by a left adjoint, understanding its behavior in general is a rather difficult task.  (In fact, it follows that $\preNerveRezki : \RelCati \ra s\Cati$ is also a right adjoint, while $s(-)^\gpd : s\Cati \ra s\S$ is of course also a left adjoint.)
\end{rem}

We have the following identifications of the Rezk nerves of minimal and maximal relative $\infty$-categories: in both of these extremal cases, the Rezk nerve does indeed compute the localization.

\begin{prop}\label{rezk nerves of min and max}
The Rezk nerve functor acts on the full subcategories of $\RelCati$ spanned by the minimal and maximal relative $\infty$-categories (both of which can be indentified with $\Cati$) according to the canonical commutative diagram
\[ \begin{tikzcd}
\Cati \arrow[hook]{r}{\min} \arrow{d}[swap]{\Nervei}[sloped, anchor=south]{\sim} & \RelCati \arrow[hookleftarrow]{r}{\max} \arrow{d}{\NerveRezki} & \Cati \arrow{d}{(-)^\gpd} \\
\CSS \arrow[hook]{r}[swap]{\forget_\CSS} & s\S \arrow[hookleftarrow]{r}[swap]{\const} & \S
\end{tikzcd} \]
in $\Cati$.
\end{prop}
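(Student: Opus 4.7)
The plan is to verify the two squares separately, in each case unwinding the definition of $\NerveRezki$ (see \cref{define infty-categorical rezk pre-nerve and rezk nerve} and \cref{define internal hom in relcats}) and reducing to an easy computation.

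For the left-hand square, I would consider $\min(\C) = (\C, \C^\simeq)$. The weak equivalences of $\min([n])$ are just identities, so every functor $[n] \to \C$ trivially satisfies the ``$\Rel$'' condition, while a natural transformation is a ``weak equivalence'' if and only if its components lie in $\C^\simeq$, i.e.\ if and only if it is a natural equivalence. Thus $\preNerveRezki(\min(\C))_n$ is the maximal subgroupoid $\Fun([n], \C)^\simeq$, which already lies in $\S \subset \Cati$, so groupoid completion is vacuous and
\[
\NerveRezki(\min(\C))_n \simeq \Fun([n], \C)^\simeq \simeq \hom_\Cati([n], \C) = \Nervei(\C)_n,
\]
the middle equivalence being the standard one recorded in \cref{extract hom-spaces from a CSS}.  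Naturality in $[n]$ and $\C$ is immediate, giving the left square.

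For the right-hand square, I would consider $\max(\C) = (\C, \C)$. Now every natural transformation between functors $[n] \to \C$ is automatically a weak equivalence, so $\preNerveRezki(\max(\C))_n = \Fun([n], \C)$ and hence $\NerveRezki(\max(\C))_n = \Fun([n], \C)^\gpd$. The key input is that the constant-functor inclusion $c_n : \C \hookra \Fun([n], \C)$ is left adjoint to the evaluation-at-$0$ functor $\Fun([n], \C) \to \C$, so by \cref{adjns induce equivces on gpd-complns} it induces an equivalence $\C^\gpd \xra{\sim} \Fun([n], \C)^\gpd$, natural in $\C$.

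The only step requiring any care --- and the one I would flag as the main (mild) obstacle --- is promoting these degreewise equivalences to a single equivalence of simplicial-space-valued functors $\const \circ (-)^\gpd \xra{\sim} \NerveRezki \circ \max$ in $\Fun(\Cati, s\S)$. For this, I would observe that the $c_n$ are natural in $[n] \in \bD^{op}$: for any $\phi : [m] \to [n]$ in $\bD$, the precomposition functor $\phi^* : \Fun([n], \C) \to \Fun([m], \C)$ manifestly sends $c_n(x)$ to $c_m(x)$, i.e.\ $\phi^* \circ c_n = c_m$. Thus the family $\{c_n\}_{[n] \in \bD^{op}}$ assembles into a morphism $\const(\id_\Cati) \to \preNerveRezki \circ \max$ in $\Fun(\Cati, s\Cati)$; applying $s(-)^\gpd$ and invoking the previous paragraph levelwise shows that the resulting morphism in $\Fun(\Cati, s\S)$ is an equivalence. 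This yields the right square and completes the proof; no genuinely delicate obstacle arises once the adjunction input from \cref{adjns induce equivces on gpd-complns} is in hand.
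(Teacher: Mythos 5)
Your proof of the left square matches the paper's essentially verbatim: identify $\preNerveRezki(\min(\C))_n$ with $\Fun([n],\C)^\simeq$, observe this already lands in $\S\subset\Cati$ so the levelwise groupoid completion is inert, and conclude.

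For the right square, however, you take a genuinely different route. The paper argues indirectly: it shows that every face-then-degeneracy composite $\Fun([n],\C)\xra{\delta_i}\Fun([n-1],\C)\xra{\sigma_j}\Fun([n],\C)$ is connected to the identity by a natural transformation, hence (by \cref{nat trans induces equivce betw maps on gpd-complns}) \emph{all} simplicial structure maps of $\NerveRezki(\max(\C))$ are equivalences; it then invokes the siftedness of $\bD^{op}$ (so $(\bD^{op})^\gpd\simeq\pt_\S$) to conclude the simplicial space is essentially constant, and finally computes the $0$th level to identify the constant value as $\C^\gpd$. You instead construct the comparison map directly: the constant-diagram functors $c_n:\C\ra\Fun([n],\C)$ are strictly natural in $[n]\in\bD^{op}$ (being corepresented by the terminal maps $[n]\ra[0]$ in $\bD$), so they assemble into a map $\const(\C)\ra\preNerveRezki(\max(\C))$ in $s\Cati$ natural in $\C$; each $c_n$ is a left adjoint (to $\ev_0$, since $0\in[n]$ is initial), so by \cref{adjns induce equivces on gpd-complns} it becomes an equivalence after $(-)^\gpd$, and an equivalence of simplicial spaces is just a levelwise one. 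Both arguments bottom out in the same basic fact (\cref{nat trans induces equivce betw maps on gpd-complns} and its corollary), but yours produces an explicit natural comparison map and sidesteps the siftedness argument, which is arguably a bit cleaner; the paper's version has the advantage of not requiring one to exhibit a specific adjunction or check naturality of the $c_n$ by hand.

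One small point you might want to make more explicit: the naturality of $\{c_n\}_{[n]\in\bD^{op}}$ should really be phrased at the level of the cosimplicial object $\bD\xra{[\bullet]}\Cat$, namely that the $c_n$ are corepresented by the (strictly natural) terminal maps $[\bullet]\ra\const([0])$ in $c\Cat$, so that the assembly into a morphism of simplicial $\infty$-categories is automatic rather than checked componentwise. Your argument gestures at this but stating it this way avoids any worry about coherence.
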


\begin{proof}
To see that the left square commutes, given any $\C \in \Cati$ we compute that
\[ \preNerveRezki(\min(\C))_n = \Fun([n],\min(\C))^\bW \simeq \Fun([n],\C)^\simeq \simeq \hom_\Cati([n],\C) = \Nervei(\C)_n \]
(in a way compatible with the evident simplicial structure maps on both sides), i.e.\! we even have a canonical equivalence
\[ \preNerveRezki(\min(\C))_\bullet \simeq \Nervei(\C)_\bullet \]
in $s\Cati$.  As $s(-)^\gpd : s \Cati \adjarr s\S : s(\forget_\S)$ is a left localization adjunction, it follows that we also have a canonical equivalence
\[ \NerveRezki(\min(\C))_\bullet \simeq \Nervei(\C)_\bullet \]
in $s\S$.

To see that the right square commutes, given any $\C \in \Cati$ we first compute that
\[ \preNerveRezki(\max(\C))_n = \Fun([n],\max(\C))^\bW \simeq \Fun([n],\C) . \]
Moreover, note that every face-then-degeneracy composite
\[ \Fun([n],\C) \xra{\delta_i} \Fun([n-1],\C) \xra{\sigma_j} \Fun([n],\C) \]
admits a natural transformation either to or from $\id_{\Fun([n],\C)}$ (depending on $i$ and $j$).\footnote{We refer the reader to \cref{hammocks:nat w.e. induces nat trans} for a more general statement (whose proof of course does not rely on the present discussion in any way).}  By \cref{nat trans induces equivce betw maps on gpd-complns}, it follows that all the structure maps of $\NerveRezki(\max(\C)) \in s\S$ are equivalences, and hence (since $\bD^{op}$ is sifted so in particular $(\bD^{op})^\gpd \simeq \pt_\S$) it follows that this simplicial space is constant.  The commutativity of the right square now follows from the computation
\[ \NerveRezki(\max(\C))_0 = \left( \Fun([0],\max(\C))^\bW \right)^\gpd \simeq \C^\gpd , \]
which gives rise to a canonical equivalence $\NerveRezki(\max(\C))_\bullet \simeq \const(\C^\gpd) \simeq \Nervei(\C^\gpd)_\bullet$ in $s\S$.
\end{proof}

Now, recall that \textit{any} relative $\infty$-category $(\R,\bW)$ admits a natural map $\min(\R) = (\R,\R^\simeq) \ra (\R,\bW)$ (namely the unit of the adjunction $\min \adj \forget_\Rel$).  Hence, by \cref{rezk nerves of min and max} we obtain a natural map
\[ \Nervei(\R) \ra \NerveRezki(\R,\bW) \]
in $s\S$.\footnote{This can also be obtained from the levelwise inclusion $\hom^\lw_{\Cati}([\bullet],\R) \simeq \left(\Fun^\lw([\bullet],\R)^\bW \right)^\simeq \hookra \Fun^\lw([\bullet],\R)^\bW$ of maximal subgroupoids.}  This immediately suggests the following two questions.

\begin{qn}\label{ask for CSS}
When does this map in $s\S$ (or equivalently, its target) actually lie in the full subcategory $\CSS \subset s\S$?
\end{qn}

\begin{qn}\label{ask for localization up to CSS-replacement}
In light of the composite adjunction
awhat is the $\infty$-categorical significance of this map?
\end{qn}

We give a partial answer to \cref{ask for CSS} in \cite{MIC-hammocks} (see the \textit{calculus theorem} (\Cref{hammocks:calculus result})).  Meanwhile, the essence of the present paper consists in the following complete answer to \cref{ask for localization up to CSS-replacement}, the \bit{local universal property of the Rezk nerve}.

\begin{thm}\label{rezk nerve of a relative infty-category is initial}
For any $(\R,\bW) \in \RelCati$ and any $\C \in \Cati$, we have a commutative square
\[ \begin{tikzcd}
\hom_{\RelCati}((\R,\bW),\min(\C)) \arrow[hook]{r} \arrow{d}[sloped, anchor=north]{\sim} & \hom_{\Cati}(\R,\C) \arrow{d}[sloped, anchor=south]{\sim} \\
\hom_{s\S}(\NerveRezki(\R,\bW),\Nervei(\C)) \arrow{r} & \hom_{\CSS}(\Nervei(\R),\Nervei(\C)) .
\end{tikzcd} \]
In other words, the natural map
\[ \Nervei(\R) \simeq \leftloc_\CSS(\Nervei(\R)) \ra \leftloc_\CSS(\NerveRezki(\R,\bW)) \]
in $\CSS$ corresponds to the localization map $\R \ra \loc{\R}{\bW}$ in $\Cati$.
\end{thm}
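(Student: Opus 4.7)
The plan is to reduce the statement, via Yoneda and the fully faithfulness of $\Nervei : \Cati \hookra \CSS$, to the assertion that the natural map $\NerveRezki(\R,\bW) \ra \Nervei(\loc{\R}{\bW})$ in $s\S$ exhibits its target as the complete Segal space generated by $\NerveRezki(\R,\bW)$. Granting this, the adjunction $\leftloc_\CSS \dashv \forget_\CSS$ combined with $\locL \dashv \min$ gives
\[
\hom_{s\S}(\NerveRezki(\R,\bW), \Nervei(\C)) \simeq \hom_{\CSS}(\Nervei(\loc{\R}{\bW}), \Nervei(\C)) \simeq \hom_\Cati(\loc{\R}{\bW}, \C) \simeq \hom_{\RelCati}((\R,\bW), \min(\C)),
\]
and a direct check that this chain is compatible with restriction along $\Nervei(\R) \ra \NerveRezki(\R,\bW)$ (which on the right-hand side corresponds to restriction along $\R \ra \loc{\R}{\bW}$) supplies the commutative square in the statement.

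To access $\leftloc_\CSS(\NerveRezki(\R,\bW))$, I would decompose $(\R,\bW)$ as a pushout in $\RelCati$:
\[
(\R,\bW) \simeq \min(\R) \sqcup_{\min(\bW)} \max(\bW),
\]
where $\min(\bW) \ra \min(\R)$ comes from the inclusion $\bW \hookra \R$ and $\min(\bW) \ra \max(\bW)$ is the identity on underlying $\infty$-categories (one checks that the componentwise pushout in $\Fun([1],\Cati)$ remains a surjective monomorphism, so computes the pushout in $\RelCati$). Applying the left adjoint $\locL$ recovers the explicit description $\loc{\R}{\bW} \simeq \R \sqcup_\bW \bW^\gpd$ of \cref{define localization}, and since $\Nervei : \Cati \xra{\sim} \CSS$ preserves colimits, $\Nervei(\loc{\R}{\bW})$ is the $\CSS$-pushout of $\Nervei(\R) \leftarrow \Nervei(\bW) \ra \Nervei(\bW^\gpd)$. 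Meanwhile, \cref{rezk nerves of min and max} identifies $\NerveRezki$ on the three corners with $\Nervei(\R)$, $\Nervei(\bW)$, and $\const(\bW^\gpd) \simeq \Nervei(\bW^\gpd)$, each of which is already a complete Segal space. It therefore suffices to show that the composite $\leftloc_\CSS \circ \NerveRezki$ carries this particular pushout in $\RelCati$ to a pushout in $\CSS$.

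This pushout-preservation is the main obstacle, and it is genuinely non-formal: by \cref{pre-nerve as sCSS}, $\NerveRezki$ is the composite of a right adjoint followed by a left adjoint, so there is no abstract reason for it to commute with colimits. My plan is to appeal to \cref{infty-catl rezk nerve agrees with 1-catl rezk nerve} to present the $\infty$-categorical Rezk nerve by the 1-categorical one applied to a strict model, and then work in the Reedy model structure on $s(s\Set_\KQ)_\Reedy$, which Bousfield-localizes to a presentation of $\CSS$. The concrete goal is to show that the maps of bisimplicial sets induced by $\min(\bW) \ra \min(\R)$ and $\min(\bW) \ra \max(\bW)$ are Reedy cofibrations---plausibly the content of the \emph{natural mono} lemma flagged in the acknowledgments---so that the naive levelwise pushout computes the Reedy homotopy pushout, and hence, via the left Quillen map from the Reedy structure to the CSS-localized one, the desired pushout in $\CSS$. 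Together with the corner identifications from \cref{rezk nerves of min and max}, this would identify $\leftloc_\CSS(\NerveRezki(\R,\bW))$ with $\Nervei(\loc{\R}{\bW})$ and complete the proof.
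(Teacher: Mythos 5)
Your initial reduction is sound, and it does align with the paper's framework: the pushout decomposition $(\R,\bW) \simeq \min(\R) \sqcup_{\min(\bW)} \max(\bW)$ in $\RelCati$, the computation of $\locL$ of that pushout, and the identification of $\NerveRezki$ on the three corners via \cref{rezk nerves of min and max} are all used (in essentially this form) in the paper's proof. The genuine gap comes in the final step, and it is fatal to the plan as stated.

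You claim it suffices to show the maps of bisimplicial sets coming from $\min(\bW)\ra\min(\R)$ and $\min(\bW)\ra\max(\bW)$ are Reedy cofibrations so that the "naive levelwise pushout computes the Reedy homotopy pushout." But even granting that, what this produces is the homotopy pushout of the three Rezk nerves, i.e.\ the object the paper calls $\mathrm{p.o.}^{s\S}$ --- \emph{not} $\NerveRezki(\R,\bW)$. These are different simplicial spaces related only by a canonical comparison map $\mathrm{p.o.}^{s\S} \ra \NerveRezki(\R,\bW)$, and the functor $\preNerveRezki$ is a \emph{right} adjoint (\cref{pre-nerve as sCSS}), so it has no business preserving pushouts of relative $\infty$-categories. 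Establishing that this comparison map becomes an equivalence after $\leftloc_\CSS$ is precisely the content of the theorem; your argument does not touch it. Relatedly, your guess about the content of \cref{natural mono} is off: it is not a Reedy cofibrancy statement, but rather the assertion that $\Nervei(\R) \ra \leftloc_\CSS(\NerveRezki(\R,\bW))$ is an \emph{epimorphism} in $\CSS$.

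The paper's actual route around this obstruction is a retraction argument. One has a commutative diagram in $s\S$
\[ \mathrm{p.o.}^{s\S} \ra \NerveRezki(\R,\bW) \ra \Nervei(\loc{\R}{\bW}) , \]
and because $\leftloc_\CSS$ preserves pushouts the outer composite becomes an equivalence after localizing. Passing to the (restricted, contravariant) Yoneda embedding into $\Fun(\CSS,\S)$, this exhibits $\Yo_{\CSS^{op}}(\Nervei(\loc{\R}{\bW})) \ra \Yo_{\CSS^{op}}(\NerveRezki(\R,\bW))$ as a map admitting a retraction up to equivalence; it is then an equivalence provided $\Yo_{\CSS^{op}}(\NerveRezki(\R,\bW)) \ra \Yo_{\CSS^{op}}(\Nervei(\R))$ is a monomorphism, which (after a reduction to $\pi_0$ via \cref{commute with cotensors}) is exactly the epimorphism statement of \cref{natural mono}. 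That lemma is then proved by an involved model-categorical argument --- but one aimed at an epimorphism claim, not at cofibrancy of the pushout legs. If you want to salvage your approach, you would need a separate argument that the comparison map $\mathrm{p.o.}^{s\S} \ra \NerveRezki(\R,\bW)$ is a $\leftloc_\CSS$-local equivalence, and at that point you are doing the hard part from scratch.
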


\noindent We will give a proof of \cref{rezk nerve of a relative infty-category is initial} in \cref{section proof of nerve}.

Using \cref{rezk nerve of a relative infty-category is initial} as input, we can now prove a statement which will easily imply the \textit{global} universal property of the Rezk nerve (\cref{Rezk nerve computes localization}).

\begin{prop}\label{rezk nerve induces localization}
The composite functor
\[ \RelCati \xra{\NerveRezki} s\S \xra{\leftloc_\CSS} \CSS \simeq \Cati \]
induces an equivalence
\[ \loc{\RelCati}{\bW_\BarKan} \xra{\sim} \Cati . \]
\end{prop}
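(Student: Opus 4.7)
\textbf{Proof plan}: My strategy is to exhibit a natural equivalence between the composite $F := \Nervei^{-1} \circ \leftloc_\CSS \circ \NerveRezki$ and the localization functor $\locL : \RelCati \to \Cati$, after which the conclusion follows immediately from \cref{loc induces BK-equivce}. The input is \cref{rezk nerve of a relative infty-category is initial}, which identifies $F(\R,\bW)$ with $\loc{\R}{\bW}$ pointwise; the real work lies in upgrading this pointwise identification to a natural equivalence.

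First, I would construct a natural transformation $\eta : \forget_\Rel \Rightarrow F$ of functors $\RelCati \to \Cati$ by applying $\Nervei^{-1} \circ \leftloc_\CSS \circ \NerveRezki$ to the natural map $(\R,\R^\simeq) \to (\R,\bW)$ in $\RelCati$ and using the identification $\NerveRezki(\min(\R)) \simeq \Nervei(\R)$ from \cref{rezk nerves of min and max}. By \cref{rezk nerve of a relative infty-category is initial}, at each $(\R,\bW)$ the component $\eta_{(\R,\bW)} : \R \to F(\R,\bW)$ is equivalent to the localization map $\R \to \loc{\R}{\bW}$, and in particular carries $\bW$ to equivalences. \cref{localization induces full mono of infty-cats} then provides a factorization $\R \to \loc{\R}{\bW} \to F(\R,\bW)$ whose second arrow is automatically an equivalence.

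The main obstacle is packaging these pointwise factorizations into a natural equivalence $\locL \xra{\sim} F$. The cleanest approach exploits the fact that \cref{localization induces full mono of infty-cats} produces a genuine full subcategory inclusion in $\Cati$ (rather than merely an equivalence onto a subspace at the level of hom-spaces), which therefore persists after postcomposition with an arbitrary functor. Concretely, viewing $\eta$ as a single functor $\RelCati \to \Fun([1],\Cati)$ and applying the fully faithful inclusion pointwise, the factorizations assemble into an essentially unique factorization of $\eta$ through the subfunctor picking out the localization arrows, yielding the desired natural equivalence. With $F \simeq \locL$ in hand, \cref{loc induces BK-equivce} immediately implies that $F$ induces an equivalence $\loc{\RelCati}{\bW_\BarKan} \xra{\sim} \Cati$.
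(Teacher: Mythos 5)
Your strategy is correct in outline and takes a genuinely different route from the paper's. The paper's proof embeds $\NerveRezki$ into the composite adjunction $\Yo_!(\mxm) \dashv \preNerveRezki$, $\locL \dashv \min$, introduces the auxiliary relative $\infty$-category $ss\S_\Rezk$, uses \cref{rezk nerve of a relative infty-category is initial} to show that $\preNerveRezki : (\RelCati)_\BarKan \to ss\S_\Rezk$ is a relative functor, and then concludes by two-out-of-three; the identification of the resulting equivalence with the one induced by $\locL$ is deferred to \cref{Rezk nerve computes localization}, where it is pinned down by appealing to $\hom_\Cati(\Cati,\Cati)\simeq\bbZ/2$. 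Your route — directly exhibiting a natural equivalence $\locL \simeq F$ — bypasses $ss\S_\Rezk$ and the pre-nerve adjunction entirely and, if completed, delivers \cref{Rezk nerve computes localization} for free without invoking the $\bbZ/2$ automorphism fact.

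The step that needs more than the gesture you give is the assembly: "applying the fully faithful inclusion pointwise" does not by itself produce a natural transformation $\locL \Rightarrow F$ factoring $\eta$. What does the work is that precomposition with the unit $\forget_\Rel \Rightarrow \locL$ induces a map
\[ \hom_{\Fun(\RelCati,\Cati)}(\locL,F) \ra \hom_{\Fun(\RelCati,\Cati)}(\forget_\Rel,F) \]
which, via the end description of mapping spaces in $\Fun(\RelCati,\Cati)$, is a limit of the pointwise monomorphisms supplied by \cref{rem localization induces mono of spaces} and hence is itself a monomorphism of spaces. Its fiber over $\eta$ is then a limit of pointwise fibers, each of which is contractible (it is contained in a fiber of a monomorphism and is nonempty by \cref{rezk nerve of a relative infty-category is initial}, since for any $(\R_1,\bW_1)\to(\R_2,\bW_2)$ the relevant composite $\R_1 \to F(\R_2,\bW_2)$ factors through $\loc{\R_1}{\bW_1}$); a limit of contractible spaces is contractible, so $\eta$ lifts essentially uniquely to $\locL \Rightarrow F$, which is a pointwise equivalence by \cref{rezk nerve of a relative infty-category is initial}. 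With that argument inserted, your proof is sound.
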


In the proof of \cref{rezk nerve induces localization}, it will be convenient to have the following terminology.

\begin{defn}\label{define rezk rel str on ssspaces}
We define the subcategory $\bW_\Rezk \subset ss\S$ of \bit{Rezk weak equivalences} to be created by the composite
\[ s(s\S) \xra{s(|{-}|)} s\S \xra{\leftloc_\CSS} \CSS \simeq \Cati . \]
(This name is meant to be suggestive of Rezk's ``complete Segal space'' model structure on the category $ss\Set$ of bisimplicial \textit{sets}.)  We denote the resulting relative $\infty$-category by $ss\S_\Rezk = (ss\S , \bW_\Rezk) \in \RelCati$.  Since left localizations are in particular free localizations (recall \cref{ex left localization qua free localization}), this composite left adjoint induces an equivalence
\[ \loc{ss\S}{\bW_\Rezk} \xra{\sim} \Cati \]
in $\Cati$.
\end{defn}

\begin{proof}[Proof of \cref{rezk nerve induces localization}]
Recalling \cref{pre-nerve as sCSS}, we have a composite adjunction
\[ \begin{tikzcd}[column sep=1.5cm]
ss\S
{\arrow[transform canvas={yshift=0.7ex}]{r}{\Yo_!(\mxm)}[swap, transform canvas={yshift=0.25ex}]{\scriptstyle \bot} \arrow[transform canvas={yshift=-0.7ex}, leftarrow]{r}[swap]{\preNerveRezki}}
& \RelCati
{\arrow[transform canvas={yshift=0.7ex}]{r}{\locL}[swap, transform canvas={yshift=0.25ex}]{\scriptstyle \bot} \arrow[transform canvas={yshift=-0.7ex}, hookleftarrow]{r}[swap]{\min}}
& \Cati .
\end{tikzcd} \]
Moreover, it follows from \cref{rezk nerves of min and max} that the right adjoint of this composite adjunction is precisely that of the composite adjunction
\[ \begin{tikzcd}[column sep=1.5cm]
s(s\S)
{\arrow[transform canvas={yshift=0.7ex}]{r}{s(|{-}|)}[swap, transform canvas={yshift=0.25ex}]{\scriptstyle \bot} \arrow[transform canvas={yshift=-0.7ex}, hookleftarrow]{r}[swap]{s(\const)}}
& s\S
{\arrow[transform canvas={yshift=0.7ex}]{r}{\leftloc_\CSS}[swap, transform canvas={yshift=0.25ex}]{\scriptstyle \bot} \arrow[transform canvas={yshift=-0.7ex}, hookleftarrow]{r}[swap]{\forget_\CSS}}
& \CSS
{\arrow[transform canvas={yshift=0.7ex}]{r}{\Nervei^{-1}}[swap, transform canvas={yshift=0.05ex}]{\sim} \arrow[transform canvas={yshift=-0.7ex}, leftarrow]{r}[swap]{\Nervei}}
& \Cati
\end{tikzcd} \]
whose left adjoint defines $\bW_\Rezk \subset ss\S$, and hence in particular it follows that the right adjoint of our original composite adjunction defines a weak equivalence
\[ ss\S_\Rezk \xla[\approx]{\min \circ \preNerveRezki} \min(\Cati) \]
in $(\RelCati)_\BarKan$.

Next, we claim that the right adjoint $\RelCati \xra{\preNerveRezki} ss\S$ is a relative functor.  To see this, first note that given any $(\R,\bW) \in \RelCati$, we obtain a counit map
\[ (\R,\bW) \we \min(\loc{\R}{\bW}) \]
in $(\RelCati)_\BarKan$ from the adjunction $\locL \adj \min$.  \cref{rezk nerve of a relative infty-category is initial} and \cref{rezk nerves of min and max} then together imply that applying the functor $\RelCati \xra{\preNerveRezki} ss\S$ to this map yields a weak equivalence
\[ \preNerveRezki(\R,\bW) \we \preNerveRezki(\min(\loc{\R}{\bW})) \simeq \const^\lw(\Nervei(\loc{\R}{\bW})) \]
in $ss\S_\Rezk$.  Hence, any weak equivalence $(\R_1,\bW_1) \we (\R_2,\bW_2)$ in $(\RelCati)_\BarKan$ induces a commutative diagram
\[ \begin{tikzcd}
\preNerveRezki(\R_1,\bW_1) \arrow{r} \arrow{d}[sloped, anchor=north]{\approx} & \preNerveRezki(\R_2,\bW_2) \arrow{d}[sloped, anchor=south]{\approx} \\
\const^\lw(\Nervei(\loc{\R_1}{\bW_1})) \arrow{r}[swap]{\sim} & \const^\lw(\Nervei(\loc{\R_2}{\bW_2}))
\end{tikzcd} \]
in $ss\S_\Rezk$, and then the top arrow in this square is also in $\bW_\Rezk \subset ss\S$ since it has the two-out-of-three property.  So this does indeed define a relative functor
\[ (\RelCati)_\BarKan \xra{\preNerveRezki} ss\S_\Rezk . \]

From here, it follows that the right adjoints of our original composite adjunction form a commutative diagram
\[ \begin{tikzcd}
ss\S_\Rezk & & \min(\Cati) \arrow{ll}{\approx}[swap]{\preNerveRezki \circ \min} \arrow{ld}[sloped, pos=0.6]{\min}[sloped, swap, pos=0.35]{\approx} \\
& (\RelCati)_\BarKan \arrow{lu}[sloped, pos=0.2]{\preNerveRezki}
\end{tikzcd} \]
in $(\RelCati)_\BarKan$, and so the entire diagram lies in $\bW_\BarKan \subset \RelCati$ since it has the two-out-of-three property.  Hence, we obtain a commutative diagram
\[ \begin{tikzcd}
ss\S_\Rezk \arrow{r}{s(|{-}|)} \arrow[bend left]{rr}{\approx} & s\S \arrow{r}{\leftloc_\CSS} & \CSS \simeq \Cati \\
(\RelCati)_\BarKan \arrow{u}{\preNerveRezki}[sloped, anchor=north]{\approx} \arrow{ru}[sloped, swap, pos=0.3]{\NerveRezki}
\end{tikzcd} \]
in $(\RelCati)_\BarKan$, which proves the claim.
\end{proof}

\begin{rem}\label{rem local univ property seems non-formal and adjns of rel infty-cats need not be homotopically well-behaved eg KQ rel infty-cats}
It does not appear possible to give a completely hands-off proof of \cref{rezk nerve induces localization}, i.e.\! one not relying on \cref{rezk nerve of a relative infty-category is initial} (or perhaps even one that would prove \cref{rezk nerve of a relative infty-category is initial} as a formal consequence).  More specifically, adjunctions of underlying $\infty$-categories do not necessarily play well with relative $\infty$-category structures, even if one of the adjoints is a relative functor: one must have some control over the behavior of \textit{both} adjoints.

For instance, the geometric realization functor $s\S \xra{|{-}|} \S$ and its restriction to the subcategory $s\Set \subset s\S$ create subcategories of weak equivalences which define the \textit{Kan--Quillen relative $\infty$-category} structures $(s\S , \bW^{s\S}_\KQ) , (s\Set , \bW^{s\Set}_\KQ) \in \RelCati$ (which underlie their respective Kan--Quillen model structures  (see \cref{sspaces:section define kan--quillen model structure})).  Moreover, these relative $\infty$-categories give rise to a diagram
\[ \begin{tikzcd}
s\S
{\arrow[transform canvas={yshift=0.7ex}]{rr}{s(\pi_0)}[swap, transform canvas={yshift=0.25ex}]{\scriptstyle \bot} \arrow[transform canvas={yshift=-0.7ex}, hookleftarrow]{rr}[swap]{s(\disc)}}
\arrow{d}
& & s\Set \arrow{d} \\
\loc{s\S}{(\bW^{s\S}_\KQ)} \arrow{rd}[sloped, swap, pos=0.6]{\sim} & & \loc{s\Set}{(\bW^{s\Set}_\KQ)} \arrow{ld}[sloped, pos=0.6]{\sim} \\
& \S
\end{tikzcd} \]
in which the right adjoint commutes with the respective localization functors: in other words, it induces a weak equivalence
\[ (s\S_\KQ,\bW^{s\S}_\KQ) \lwe (s\Set_\KQ , \bW^{s\Set}_\KQ) \]
in $(\RelCati)_\BarKan$.  Nevertheless, the left adjoint is clearly very far from also defining a weak equivalence in $(\RelCati)_\BarKan$.
\end{rem}

We can now prove the \bit{global universal property of the Rezk nerve}.

\begin{cor}\label{Rezk nerve computes localization}
The composite functor
\[ \RelCati \xra{\NerveRezki} s\S \xra{\leftloc_\CSS} \CSS \xra[\sim]{\Nervei^{-1}} \Cati \]
is canonically equivalent in $\Fun(\RelCati , \Cati)$ to the localization functor
\[ \RelCati \xra{\locL} \Cati . \]
\end{cor}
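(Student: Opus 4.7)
The plan is to deduce this formally from \cref{rezk nerve induces localization} by upgrading its equivalence on $\loc{\RelCati}{\bW_\BarKan}$ to a canonical equivalence of functors out of $\RelCati$ itself.

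Write $F = \Nervei^{-1} \circ \leftloc_\CSS \circ \NerveRezki$ for the displayed composite. Both $F$ and $\locL$ send $\bW_\BarKan \subset \RelCati$ to equivalences in $\Cati$: for $\locL$ this is \cref{define BarKan rel str on RelCati}, while for $F$ it is the content of \cref{rezk nerve induces localization}. Hence by \cref{localization induces full mono of infty-cats} applied to $(\RelCati, \bW_\BarKan)$, restriction along the localization functor $\RelCati \to \loc{\RelCati}{\bW_\BarKan}$ is a fully faithful embedding whose image contains both $F$ and $\locL$, so it suffices to produce a canonical equivalence between the resulting functors $\bar{F}, \bar{\locL} : \loc{\RelCati}{\bW_\BarKan} \to \Cati$.

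By \cref{rezk nerve induces localization} and \cref{loc induces BK-equivce} respectively, both $\bar{F}$ and $\bar{\locL}$ are equivalences. I would then identify both as the inverse of the equivalence $\min : \Cati \xra{\sim} \loc{\RelCati}{\bW_\BarKan}$ of \cref{loc induces BK-equivce}. For $\bar{\locL}$ this is immediate from \cref{ex minimal localization}: the equivalence $\locL \circ \min \simeq \id_\Cati$ descends to $\bar{\locL} \circ \min \simeq \id_\Cati$. For $\bar{F}$, \cref{rezk nerves of min and max} gives $\NerveRezki(\min(\C)) \simeq \Nervei(\C)$, and since $\Nervei(\C)$ is already a complete Segal space, applying $\leftloc_\CSS$ and then $\Nervei^{-1}$ returns $\C$; thus $F \circ \min \simeq \id_\Cati$ and likewise $\bar{F} \circ \min \simeq \id_\Cati$. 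By uniqueness of inverses, $\bar{F} \simeq \bar{\locL}$ in $\Fun(\loc{\RelCati}{\bW_\BarKan}, \Cati)$, and therefore $F \simeq \locL$ in $\Fun(\RelCati, \Cati)$.

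I do not anticipate any real obstacle here: the substantive work has been done in \cref{rezk nerve induces localization}, and what remains is the routine bookkeeping needed to transfer its conclusion from $\loc{\RelCati}{\bW_\BarKan}$ back to $\RelCati$.
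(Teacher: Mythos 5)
Your proof is correct, but it takes a genuinely different route from the paper's. The paper's argument, after making the same reduction to the induced functors $\bar{F}, \bar{\locL} : \loc{\RelCati}{\bW_\BarKan} \ra \Cati$ and noting both are equivalences, invokes the To\"{e}n--Lurie computation $\hom_{\Cati}(\Cati,\Cati) \simeq \bbZ/2$ (so the autoequivalences of $\Cati$ are just $\id$ and $(-)^{op}$) and then concludes ``by inspection'' that the two equivalences do not differ by $(-)^{op}$. Your argument bypasses this rigidity theorem entirely: you identify both $\bar{F}$ and $\bar{\locL}$ explicitly as the inverse of the descended $\min : \Cati \xra{\sim} \loc{\RelCati}{\bW_\BarKan}$, using \cref{ex minimal localization} for $\bar{\locL}$ and the left square of \cref{rezk nerves of min and max} (plus $\leftloc_\CSS \circ \forget_\CSS \simeq \id$) for $\bar{F}$, and then invoke uniqueness of inverse equivalences. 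This is more self-contained and more explicit about what the paper's ``inspection'' actually amounts to; the paper's route is perhaps shorter to state but leans on an external black box. One small point of hygiene: you apply \cref{localization induces full mono of infty-cats} with $\R = \RelCati$, a large $\infty$-category; this is the same implicit size maneuver the paper makes when asserting that the two functors ``factor uniquely through the localization,'' so it is not a gap relative to the paper's own standard of rigor, but it is worth being aware of.
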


\begin{proof}
Since these functors both take the subcategory $\bW_\BarKan \subset \RelCati$ into $(\Cati)^\simeq \subset \Cati$, they factor uniquely through the localization
\[ \RelCati \ra \loc{\RelCati}{\bW_\BarKan} . \]
The resulting functors $\loc{\RelCati}{\bW_\BarKan} \ra \Cati$ are then both equivalences, the former by \cref{rezk nerve induces localization} and the latter by \cref{loc induces BK-equivce}.  The result now follows by inspection, using the fact that
\[ \hom_{\Cati}(\Cati,\Cati) \simeq \bbZ/2 \]
(see \cite[Th\'{e}or\`{e}m 6.3]{Toen} or \cite[Theorem 4.4.1]{LurieGoo}).
\end{proof}

\begin{rem}\label{BK proved global univ prop for relcats}
The global universal property of the Rezk nerve (\cref{Rezk nerve computes localization}) can be seen as a generalization of work of Barwick--Kan.  To see this, consider the composite pair of Quillen adjunctions
\[ s(s\Set_\KQ)_\Reedy \adjarr ss\Set_\Rezk \adjarr \strrelcat_\BarKan , \]
where
\begin{itemizesmall}
\item the first is the left Bousfield localization which defines the Rezk model structure (see \cite[Theorem 7.2]{RezkCSS}) and presents the adjunction $\leftloc_\CSS : s\S \adjarr \CSS : \forget_\CSS$, and
\item the second is the Quillen equivalence which defines the Barwick--Kan model structure (see \cite[Theorem 6.1]{BK-relcats}).
\end{itemizesmall}
As the latter is constructed using the lifting theorem for cofibrantly generated model categories, its right adjoint preserves all weak equivalences by definition.  Moreover, Barwick--Kan provide a natural weak equivalence in $s(s\Set_\KQ)_\Reedy$ (and hence also in $ss\Set_\Rezk$) from the Rezk nerve functor to the right adjoint of their Quillen equivalence (see \cite[Lemma 5.4]{BK-relcats}).

Now, consider the commutative triangle
\[ \begin{tikzcd}
s(s\Set_\KQ)_\Reedy \arrow{rd}[swap]{\id_{ss\Set}} & & \strrelcat_\triv \arrow{ld} \arrow{ll}[swap]{\NerveRezk} \\
 & ss\Set_\Rezk
 \end{tikzcd} \]
in $\RelCat$ (in which we take $\strrelcat$ with the \textit{trivial} model structure since we are interested in relative categories themselves here).  Applying the localization functor
\[ \RelCat \hookra \RelCati \xra{\locL} \Cati , \]
this yields a commutative triangle
\[ \begin{tikzcd}
s\S \arrow{rd}[swap]{\leftloc_\CSS} & & \strrelcat \arrow{ll}[swap]{s(|{-}|) \circ \NerveRezk} \arrow{ld}{\Nervei \circ \locL} \\
& \CSS
\end{tikzcd} \]
in $\Cati$, in which
\begin{itemizesmall}
\item the upper map coincides with the composite
\[ \strrelcat \ra \RelCat \hookra \RelCati \xra{\NerveRezki} s\S \]
by \cref{infty-catl rezk nerve agrees with 1-catl rezk nerve}, and
\item the map $\strrelcat \ra \CSS$ can be identified as indicated since by what we have just seen it is equivalent to the projection
\[ \strrelcat \ra \loc{\strrelcat}{\bW_\BarKan} \simeq \Cati \]
to the underlying $\infty$-category (which is indeed given by localization).
\end{itemizesmall}
It follows that we obtain a commutative diagram
\[ \begin{tikzcd}
\strrelcat \arrow{r} \arrow{d}[swap]{\locL} & \RelCati \arrow{r}{\NerveRezki} & s\S \arrow{d}{\leftloc_\CSS} \\
\Cati \arrow{rr}{\sim}[swap]{\Nervei} & & \CSS
\end{tikzcd} \]
in $\Cati$, which is precisely the restriction of the assertion of the global universal property of the Rezk nerve (\cref{Rezk nerve computes localization}) to the category $\strrelcat$, as claimed.
\end{rem}

\begin{rem}
Taken together, \cref{rezk nerve induces localization} and \cref{Rezk nerve computes localization} imply that in fact the adjunction
\[ \begin{tikzcd}[column sep=1.5cm]
ss\S
{\arrow[transform canvas={yshift=0.7ex}]{r}{\Yo_!(\mxm)}[swap, transform canvas={yshift=0.25ex}]{\scriptstyle \bot} \arrow[transform canvas={yshift=-0.7ex}, leftarrow]{r}[swap]{\preNerveRezki}}
& \RelCati
\end{tikzcd} \]
has
\begin{itemizesmall}
\item that both adjoints are relative functors (with respect to their respective Rezk and Barwick--Kan relative structures), and
\item that the unit and counit are both natural weak equivalences.
\end{itemizesmall}
This can be seen as follows.

First of all, recall that in the proof of \cref{rezk nerve induces localization}, we already saw that the right adjoint is a relative functor.  On the other hand, the left adjoint is a relative functor because the composite left adjoint
\[ ss\S \xra{\Yo_!(\mxm)} \RelCati \xra{\locL} \Cati \]
agrees with the left adjoint
\[ ss\S \xra{s(|{-}|)} s\S \xra{\leftloc_\CSS} \CSS \xra[\sim]{\Nervei^{-1}} \Cati \]
(since we have seen in the proof of \cref{rezk nerve induces localization} that they share a right adjoint), and so in fact the subcategory $\bW_\Rezk \subset ss\S$ is created by pulling back the subcategory $\bW_\BarKan \subset \RelCati$.

Next, we can see that the counit map
\[ \Yo_!(\mxm)(\preNerveRezki(\R,\bW)) \ra (\R,\bW) \]
is a weak equivalence in $(\RelCati)_\BarKan$ as follows.  Applying the functor $\RelCati \xra{\locL} \Cati$, we obtain a map
\[ \locL ( \Yo_!(\mxm)(\preNerveRezki(\R,\bW))) \ra \loc{\R}{\bW} \]
in $\Cati$.  Then, again appealing to the fact that these composite left adjoints $ss\S \ra \Cati$ agree, we can reidentify the source as
\[ \locL(\Yo_!(\mxm)(\preNerveRezki(\R,\bW)) \simeq \Nervei^{-1}(\leftloc_\CSS(s(|{-}|)(\preNerveRezki(\R,\bW)))) \simeq \Nervei^{-1}(\leftloc_\CSS(\NerveRezki(\R,\bW))) . \]
So, we can reidentify this map as
\[ \Nervei^{-1}(\leftloc_\CSS(\NerveRezki(\R,\bW))) \ra \loc{\R}{\bW} , \]
which is an equivalence by \cref{rezk nerve of a relative infty-category is initial}.  So the counit map is indeed a weak equivalence in $(\RelCati)_\BarKan$, i.e.\! the counit is a natural weak equivalence.

Finally, we can see that the unit map
\[ \preNerveRezki(\Yo_!(\mxm)(Y)) \ra Y \]
is a weak equivalence in $ss\S_\Rezk$ as follows.  Applying the composite left adjoint
\[ ss\S \xra{\Nervei^{-1} \circ \leftloc_\CSS \circ s(|{-}|)} \Cati \]
and appealing to \cref{Rezk nerve computes localization}, we obtain a map
\[ \locL(\Yo_!(\mxm)(Y)) \ra \Nervei^{-1}(\leftloc_\CSS(s(|{-}|)(Y))) \]
in $\Cati$, and the same equivalence of composite left adjoints $ss\S \ra \Cati$ implies that this is an equivalence.  So the unit map is indeed a weak equivalence in $ss\S_\Rezk$, i.e.\! the unit is a natural weak equivalence.
\end{rem}

\section{The proof of \cref{rezk nerve of a relative infty-category is initial}}\label{section proof of nerve}

Let $(\R,\bW)$ be an arbitrary relative $\infty$-category.  In this section, we show that as a simplicial space, its Rezk nerve $\NerveRezki(\R,\bW)$ enjoys the desired universal property for mapping \textit{into} complete Segal spaces: for any $\C \in \Cati$, we have a commutative diagram
\[ \begin{tikzcd}
\hom_{\RelCati}((\R,\bW),\min(\C)) \arrow[hook]{r} \arrow{d}[sloped, anchor=north]{\sim} & \hom_{\Cati}(\R,\C) \arrow{d}[sloped, anchor=south]{\sim} \\
\hom_{s\S}(\NerveRezki(\R,\bW),\Nervei(\C)) \arrow{r} & \hom_{\CSS}(\Nervei(\R),\Nervei(\C))
\end{tikzcd} \]
in $\S$, as asserted in \cref{rezk nerve of a relative infty-category is initial}.

Most of the proof is reasonably straightforward, and we can give it immediately.  But there will be one technical result (\cref{natural mono}) that is necessary for the proof which will occupy us for the remainder of the section.

\begin{proof}[Proof of \cref{rezk nerve of a relative infty-category is initial}]
By definition, the localization $\loc{\R}{\bW} \in \Cati$ is given as the pushout
\[ \begin{tikzcd}
\bW \arrow{r} \arrow{d} & \R \arrow{d} \\
\bW^\gpd \arrow{r} & \loc{\R}{\bW}
\end{tikzcd} \]
in $\Cati$; under the equivalence $\Nervei : \Cati \xra{\sim} \CSS$, this corresponds to a pushout diagram
\[ \begin{tikzcd}
\Nervei(\bW) \arrow{r} \arrow{d} & \Nervei(\R) \arrow{d} \\
\Nervei(\bW^\gpd) \arrow{r} & \Nervei(\loc{\R}{\bW})
\end{tikzcd} \]
in $\CSS \subset s\S$.  On the other hand, there is an evident commutative diagram
\[ \begin{tikzcd}[row sep=2cm]
(\bW,\bW^\simeq) \arrow{r} \arrow{d} & (\R,\R^\simeq) \arrow{rd} \\
(\bW,\bW) \arrow[crossing over]{rr} \arrow{rrd} & & (\R,\bW) \arrow{d} \\
& & (\loc{\R}{\bW} , \loc{\R}{\bW}^\simeq)
\end{tikzcd} \]
in $\RelCati$.  Applying the functor $\NerveRezki : \RelCati \ra s\S$ and taking the pushout of the upper left span, in light of \cref{rezk nerves of min and max} we obtain a commutative diagram
\[ \begin{tikzcd}[row sep=2cm]
\Nervei(\bW) \arrow{r} \arrow{d} & \Nervei(\R) \arrow{rd} \arrow{d} \\
\Nervei(\bW^\gpd) \arrow{r} \arrow{rrd} & \mathrm{p.o.}^{s\S} \arrow{r} \arrow{rd} & \NerveRezki(\R,\bW) \arrow{d} \\
& & \Nervei(\loc{\R}{\bW})
\end{tikzcd} \]
in $s\S$,
\begin{itemizesmall}
\item where $\mathrm{p.o.}^{s\S}$ denotes the pushout in $s\S$ of the upper left span, and
\item which contains as a subdiagram the above pushout square in $\CSS \subset s\S$.
\end{itemizesmall}
Our goal is to prove that the induced map
\[ \leftloc_\CSS(\NerveRezki(\R,\bW)) \ra \leftloc_\CSS(\Nervei(\loc{\R}{\bW})) \simeq \Nervei(\loc{\R}{\bW}) \]
is an equivalence in $\CSS \subset s\S$.

For notational convenience, let us simply write
\[ \begin{tikzcd}[column sep=1.5cm]
(s\S)^{op} \arrow{r}{\Yo_{(s\S)^{op}}} \arrow[dashed, bend right=20]{rr}[swap]{\Yo_{\CSS^{op}}} & \Fun(s\S,\S) \arrow{r}{- \circ \forget_\CSS} & \Fun(\CSS,\S)
\end{tikzcd} \]
for the restricted contravariant Yoneda functor, so that for any $Y \in s\S$ we have
\[ \Yo_{\CSS^{op}}(Y) = \hom_{s\S}(Y,\forget_\CSS(-)) \simeq \hom_\CSS(\leftloc_\CSS(Y),-) \]
in $\Fun(\CSS,\S)$.  Then, by Yoneda's lemma, our aforestated goal is equivalent to proving that the map
\[ \NerveRezki(\R,\bW) \ra \Nervei(\loc{\R}{\bW}) \]
in $s\S$ induces an equivalence
\[ \Yo_{\CSS^{op}}(\NerveRezki(\R,\bW)) \la \Yo_{\CSS^{op}}(\Nervei(\loc{\R}{\bW})) \]
in $\Fun(\CSS,\S)$.  Moreover, as the functor $s\S \xra{\leftloc_\CSS} \CSS$ commutes with pushouts (being a left adjoint), it follows that the map
\[ \mathrm{p.o.}^{s\S} \ra \Nervei(\loc{\R}{\bW}) \]
in $s\S$ induces an equivalence
\[ \leftloc_\CSS \left( \mathrm{p.o.}^{s\S} \right) \xra{\sim} \leftloc_\CSS(\Nervei(\loc{\R}{\bW})) \simeq \Nervei(\loc{\R}{\bW}) \]
in $\CSS \subset s\S$, and so the above diagram in $s\S$ gives rise to a retraction diagram
\[ \begin{tikzcd}
\Yo_{\CSS^{op}}(\mathrm{p.o.}^{s\S}) & \Yo_{\CSS^{op}}(\NerveRezki(\R,\bW)) \arrow{l} \\
& \Yo_{\CSS^{op}}(\Nervei(\loc{\R}{\bW})) \arrow{lu}[sloped, pos=0.4]{\sim} \arrow{u}
\end{tikzcd} \]
in $\Fun(\CSS,\S)$ into which this map which we must show to be an equivalence fits, and which it therefore suffices to show is in fact a diagram of equivalences.

Now, observe that $\CSS$ is complete and hence in particular admits all cotensors, and observe moreover that the functor
\[ (s\S)^{op} \xra{\Yo_{\CSS^{op}}} \Fun(\CSS,\S) \]
factors through the contravariant Yoneda embedding and hence takes values in functors which commute with cotensors.  So by \cref{commute with cotensors}, it suffices to show that after postcomposition with $\S \xra{\pi_0} \Set$, the above retraction diagram in $\Fun(\CSS,\S)$ becomes a diagram of natural isomorphisms in $\Fun(\CSS,\Set)$.  Hence, it suffices to show that the induced map
\[ \left( \pi_0 \circ \Yo_{\CSS^{op}}(\NerveRezki(\R,\bW)) \right)
\ra \left( \pi_0 \circ \Yo_{\CSS^{op}}(\mathrm{p.o.}^{s\S}) \right) \]
is a natural monomorphism in $\Fun(\CSS,\Set)$.  This follows from the stronger statement that the composite
\[ \left( \pi_0 \circ \Yo_{\CSS^{op}}(\NerveRezki(\R,\bW)) \right)
\ra \left( \pi_0 \circ \Yo_{\CSS^{op}}(\mathrm{p.o.}^{s\S}) \right)
\ra \left( \pi_0 \circ \Yo_{\CSS^{op}}(\Nervei(\R)) \right)
 \]
is a natural monomorphism in $\Fun(\CSS,\Set)$, which in turn follows from \cref{natural mono}.
\end{proof}

% simplify above proof and cut this next lemma? 
% from my second march 19 email: we have this composite $\Nervei(\R) \ra \textup{p.o.}^{s\S} \ra \NerveRezki(\R,\bW) \ra \Nervei(\loc{\R}{\bW})$, and upon applying $\hom_{s\S}(-,Y)$ for $Y \in \CSS \subset s\S$, we get a composite $A \la B \la C \la B$ in $\S$, where the map $B \ra B$ is the identity and the map $C \ra A$ is an inclusion of path components (because $\Nervei(\R) \ra \leftloc_\CSS(\NerveRezki(\R,\bW))$ is an epimorphism in $\CSS$).  if nothing else, from here we could argue that the maps $B \ra C$ and $C \ra B$ are iso's on homotopy groups for all basepoints (although perhaps there's a more categorical argument)...
% in chat (on march 24), zhen lin points out that i can really just look at $\pi_0 \hom_{s\S}(-,Y)$, since equivalences in an $\infty$-category are created by the projection to its homotopy category; then, this just becomes easy statements about epi's and mono's in $\Set$.  note that this \textit{doesn't} require being cotensored over $\S$, just the observation that $\C \ra \ho(\C)$ creates equivalences.
% if i remove cotensors here, see if they're used anywhere else -- if not, maybe remove them from the appendix.  or maybe not, since they're just dual to tensors which certainly \textit{do} get used.

We needed the following easy result in the proof of \cref{rezk nerve of a relative infty-category is initial}.

\begin{lem}\label{commute with cotensors}
Let $\C$ be an $\infty$-category admitting cotensors, and suppose we are given two space-valued functors $F,G \in \Fun(\C,\S)$ that commute with cotensors.  Then, a natural transformation $F \ra G$ is a natural equivalence in $\Fun(\C,\S)$ if and only if its postcomposition $\pi_0 F \ra \pi_0 G$ with $\S \xra{\pi_0} \Set$ is a natural isomorphism in $\Fun(\C,\Set)$.
\end{lem}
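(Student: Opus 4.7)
The forward direction is trivial, since $\pi_0 : \S \to \Set$ preserves equivalences. So the content is the reverse implication: assuming $\pi_0 F \to \pi_0 G$ is a natural isomorphism, I want to show the original natural transformation $F \to G$ is a componentwise equivalence in $\S$.

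The plan is to leverage the cotensor-preservation hypothesis to feed many different ``test objects'' into $\pi_0$, via a Yoneda argument in the homotopy category $\ho(\S)$. Fix any $c \in \C$; I want to show $F(c) \to G(c)$ is an equivalence in $\S$. By Yoneda in $\ho(\S)$, it suffices to check that for every $K \in \S$, the induced map
\[ [K, F(c)] \longrightarrow [K, G(c)] \]
of homotopy classes is a bijection. But since $\C$ admits cotensors, I can form the cotensor object $K \pitchfork c \in \C$, and by hypothesis both $F$ and $G$ commute with cotensors, so evaluating the natural transformation $F \to G$ at $K \pitchfork c$ yields precisely the mapping-space map $\hom_\S(K, F(c)) \to \hom_\S(K, G(c))$. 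Applying $\pi_0$ to this gives the map $[K,F(c)] \to [K,G(c)]$ displayed above. By assumption $\pi_0 F \to \pi_0 G$ is a natural isomorphism evaluated at any object, in particular at $K \pitchfork c$, so this map is a bijection.

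I expect no real obstacle; the lemma is formal given the cotensor hypothesis, and the single substantive input is Yoneda's lemma in $\ho(\S)$ (that a morphism of spaces inducing isomorphisms on $[K,-]$ for every $K \in \S$ is an equivalence), which in turn amounts to the fact that equivalences in $\S$ are precisely the isomorphisms in its homotopy category. The only mildly subtle point is to make sure that the identification $F(K \pitchfork c) \simeq \hom_\S(K, F(c))$ is genuinely natural in $K$ in a way compatible with the natural transformation $F \to G$, but this is automatic from the definition of ``commuting with cotensors'' as a property of functors $\C \to \S$.
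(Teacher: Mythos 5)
Your proof is correct and follows essentially the same route as the paper: reduce to a componentwise check, detect equivalences in $\ho(\S)$ via $[K,-]$, and use the cotensor hypothesis to identify $[K,F(c)] \to [K,G(c)]$ with $\pi_0 F(K \pitchfork c) \to \pi_0 G(K \pitchfork c)$. The naturality point you flag at the end is indeed the only detail worth being careful about, and you handle it the same way the paper does.
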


\begin{proof}
The ``only if'' direction is clear.  So, suppose we are given a natural transformation $F \ra G$ in $\Fun(\C,\S)$ such that the induced natural transformation $\pi_0 F \ra \pi_0 G$ is a natural equivalence in $\Fun(\C,\Set)$.  Since equivalences in $\Fun(\C,\S)$ are determined componentwise, it suffices to show that for any $Y \in \C$, the map $F(Y) \ra G(Y)$ is an equivalence in $\S$.  In turn, since equivalences in $\S$ are created in $\ho(\S)$, by Yoneda's lemma it suffices to show that for any $Z \in \S$, the induced map $[Z,F(Y)]_\S \ra [Z,G(Y)]_\S$ is an isomorphism in $\Set$.  But since $\C$ admits cotensors, then we can reidentify this map via the canonical commutative square
\[ \begin{tikzcd}
\pi_0(F(Z \cotensoring Y)) \arrow{r}{\cong} \arrow{d}[sloped, anchor=north]{\rotatebox{180}{$\cong$}} & \pi_0( G( Z \cotensoring Y)) \arrow{d}[sloped, anchor=south]{\cong} \\
{[Z,F(Y)]_\S} \arrow{r} & {[Z,G(Y)]_\S}
\end{tikzcd} \]
in $\Set$, in which the top arrow is an isomorphism by the assumption that $\pi_0 F \ra \pi_0 G$ is a natural isomorphism and the vertical arrows are isomorphisms by the assumption that $F$ and $G$ commute with cotensors.  This proves the claim.
\end{proof}

Before moving on to \cref{natural mono}, it will be convenient to have the following bit of terminology.

\begin{defn}
A morphism in a model category $\M$ is called a \bit{homotopy epimorphism} if it presents an epimorphism in the underlying $\infty$-category $\loc{\M}{\bW}$.
\end{defn}

We now proceed to the technical heart of the proof of \cref{rezk nerve of a relative infty-category is initial}.  We warn the reader that our proof of the following result is (perhaps unexpectedly, and certainly unsatisfyingly) complicated.

\begin{lem}\label{natural mono}
The map $\Nervei(\R) \ra \leftloc_\CSS(\NerveRezki(\R,\bW))$ is an epimorphism in $\CSS$.
\end{lem}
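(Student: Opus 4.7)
The plan is to reduce the epimorphism claim in $\CSS$ to an explicit construction of simplicial homotopies in a model-categorical presentation of the Rezk nerve. By Yoneda and the adjunction $\leftloc_\CSS \dashv \forget_\CSS$, it suffices to show that for every $\C \in \Cati$ the restriction map
\[ r_\C : \hom_{s\S}(\NerveRezki(\R,\bW), \Nervei(\C)) \longrightarrow \hom_{s\S}(\Nervei(\R),\Nervei(\C)) \simeq \hom_\Cati(\R,\C) \]
is a monomorphism of spaces, i.e.\ has $(-1)$-truncated fibers. A functor $F : \R \to \C$ sending $\bW$ to equivalences visibly extends to a map $\NerveRezki(\R,\bW) \to \Nervei(\C)$ by level-wise postcomposition, so the fiber of $r_\C$ over such an $F$ is nonempty; this surjectivity on $\pi_0$ is already implicit in the factorization used in the proof of \cref{rezk nerve of a relative infty-category is initial}. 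The present lemma is precisely the upgrade from nonemptiness to contractibility of that fiber.

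To carry out the upgrade, I would pass to a model-categorical presentation along the lines of \cref{rezk nerve for marked qcats}: choose a marked-quasicategorical model $(R,W)$ of $(\R,\bW)$ so that, by \cref{infty-catl rezk nerve agrees with 1-catl rezk nerve}, the 1-categorical Rezk nerve construction in $s(s\Set_\Joyal)_\Reedy$ models $\preNerveRezki(\R,\bW)$ and the same bisimplicial set in $s(s\Set_\KQ)_\Reedy$ models $\NerveRezki(\R,\bW)$. In this setting the claim becomes that the canonical inclusion of bisimplicial sets
\[ \Nerve(R) \hookrightarrow \NerveRezk(R,W) \]
is a homotopy epimorphism with respect to the Rezk-Bousfield-localization of the Reedy model structure, i.e.\ with respect to $\bW_\Rezk \subset ss\S$ (see \cref{define rezk rel str on ssspaces}). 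The task then reduces to producing, for any two parallel maps out of $\NerveRezk(R,W)$ into $\Nervei(\C)$ whose restrictions to $\Nerve(R)$ are homotopic, an explicit coherent system of simplicial homotopies realizing their equivalence, and then iterating the same construction to fill the higher coherences needed for $(-1)$-truncatedness.

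The main obstacle is the combinatorial verification of these homotopies. The individual homotopy at level $n$ can be written down more-or-less by hand, exploiting the fact that $\Fun([n],\C)^\simeq$ is a Kan complex together with the closure of $\Fun([n],R)^W$ under natural weak equivalences inside $\Fun([n],R)$; what is genuinely hard is orchestrating this construction simultaneously across all simplicial dimensions so as to respect the full Reedy structure on $\Delta^{op} \times \Delta^{op}$. The acknowledgment of a computer-assisted verification of the face-and-degeneracy identities among the resulting simplicial homotopies is a strong indication that the bookkeeping is infeasible by hand. All of the preceding reductions are essentially pro forma; the genuine content of the proof is this one-off combinatorial construction, which is why the proof ends up being, in the author's own words, ``unsatisfyingly complicated''.
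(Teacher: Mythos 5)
Your opening reductions are fine and line up with the paper's framework: by Yoneda, epimorphism in $\CSS$ is equivalent to the restriction maps $\hom_{s\S}(\NerveRezki(\R,\bW), \Nervei(\C)) \to \hom_\Cati(\R,\C)$ being monomorphisms of spaces, and passing to marked quasicategories and the bisimplicial set $\NerveRezk(\ttR,\ttW)$ is exactly what the paper does. The gap appears where you decide what the combinatorics has to accomplish.

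Your plan is to show, for any pair of parallel maps out of $\NerveRezk(R,W)$ whose restrictions to $\Nerve(R)$ are homotopic, that the maps themselves are homotopic, ``and then iterating the same construction to fill the higher coherences needed for $(-1)$-truncatedness.'' That last clause is where the proposal becomes untenable: showing a map of spaces has $(-1)$-truncated fibers by directly exhibiting homotopies and then higher homotopies is an infinite hierarchy of coherence data, and nothing in the proposal explains how to organize it. The paper avoids this entirely with a factorization that redistributes the burden. After transporting along the Joyal--Tierney equivalences, one has the diagonal functor $\diag^*$, a natural map $\diag^* \to t_!$, and a resulting square
\[
\begin{tikzcd}
\ttR \cong \diag^*(\pr_2^*\ttR) \arrow{r}{\alpha} \arrow{d}[swap]{\cong} & \diag^*(\NerveRezk(\ttR,\ttW)) \arrow{d}{\beta} \\
t_!(\pr_2^*\ttR) \arrow{r}{\gamma} & t_!(\NerveRezk(\ttR,\ttW))
\end{tikzcd}
\]
in $s\Set_\Joyal$. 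The component $\beta$ is a homotopy epimorphism for entirely formal reasons: the natural transformation $\diag^* \to t_!$ is a coend against the map of cosimplicial objects $\Delta^\bullet \to (\Delta^\bullet)^\ttgpd$, which presents the componentwise epimorphism $[i] \to \pt$ in $\Cati$ (corepresenting the inclusion $\C^\simeq \hookrightarrow \hom_{\Cati}([i],\C)$), and left Quillen functors preserve epimorphisms. Meanwhile $\alpha$ is shown to be an actual weak equivalence in $s\Set_\Joyal$, not merely a homotopy epimorphism, by exhibiting a retraction $\rho$ with $\rho\alpha = \id$ and a short zigzag of two explicit simplicial homotopies $H_1, H_2$ connecting $\alpha\rho$ to the identity. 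The computer-checked bookkeeping the acknowledgments refer to is the finite list of face--degeneracy identities for $H_1$ and $H_2$ --- a one-off verification --- not an open-ended iteration to ever-higher coherences. So the genuine missing idea in your proposal is this factorization into a formal epimorphism piece ($\beta$) and a concrete weak-equivalence piece ($\alpha$); without it, your ``iterate to fill higher coherences'' is not a proof outline but a placeholder for an unbounded amount of work.
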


\begin{proof}
Our proof will proceed using model categories -- primarily $ss\Set_\Rezk$ and $s\Set_\Joyal$, but also a number of others auxiliarily --, and will also use the language of marked simplicial sets (see e.g.\! \sec T.3.1).

We begin by recalling the two Quillen equivalences between $ss\Set_\Rezk$ and $s\Set_\Joyal$ given in \cite{JT}.
\begin{enumerate}
\item\label{evil JT q eq}
 Let us write $\bD^{op} \times \bD^{op} \xra{\pr_2} \bD^{op}$ for the second projection map and $\bD^{op} \xra{i_2} \bD^{op} \times \bD^{op}$ for the functor $\const([0]^\opobj) \times \id_{\bD^{op}}$.  Pullbacks along these two functors induce the Quillen equivalence
\[ \pr_2^* : s\Set_\Joyal \adjarr ss\Set_\Rezk : i_2^* \]
of \cite[Theorem 4.11]{JT}.
\item\label{Rezk nerve JT q eq}
Let us write $(\Delta^i)^\ttgpd \in s\Set$ for the nerve of the strict (i.e.\! objects-preserving) groupoid completion of $[i] \in \strcat$, and let us write $t_! : ss\Set \ra s\Set$ for the left Kan extension
\[ \begin{tikzcd}[column sep=4cm]
\bD \times \bD \arrow{r}{([n],[i]) \mapsto \Delta^n \times (\Delta^i)^\ttgpd} \arrow{d} & s\Set \\
ss\Set \arrow[dashed, bend right=5]{ru}
\end{tikzcd} \]
along the (1-categorical) Yoneda embedding.  This has a right adjoint $t^! : s\Set \ra ss\Set$ given by
\[ t^!(Y) = \{ \{ \hom_{s\Set}(\Delta^n \times (\Delta^i)^\ttgpd , Y) \}_{i \geq 0} \}_{n \geq 0} , \]
and together these fit into the Quillen equivalence
\[ t_! : ss\Set_\Rezk \adjarr s\Set_\Joyal : t^! \]
of \cite[Theorem 4.12]{JT}.
\end{enumerate}

Now, suppose that $\ttR \in s\Set_\Joyal^f$ is a quasicategory presenting $\R \in \Cati$, and let $(\ttR,\ttW) \in s\Set^+$ be the marked simplicial set obtained by marking precisely those edges of $\ttR$ which present maps in $\bW \subset \R$.  For any $n \geq 0$, the $\infty$-category $\Fun([n],\R)$ is presented by the object
\[ \ul{\hom}_{s\Set}(\Delta^n,\ttR) = \{ \hom_{s\Set}(\Delta^n \times \Delta^i , \ttR) \}_{i \geq 0} \in s\Set_\Joyal , \]
and hence its subcategory
\[ \Fun([n],\R)^\bW \subset \Fun([n],\R) \]
is presented by the object
\[ \{ \hom_{s\Set^+}((\Delta^n)^\flat \times (\Delta^i)^\sharp,(\ttR,\ttW)) \}_{i \geq 0} \in s\Set_\Joyal . \]
These constructions are contravariantly functorial in $[n] \in \bD$, and hence we obtain that the Rezk pre-nerve
\[ \preNerveRezki(\R,\bW) = \Fun^\lw([\bullet],\R)^\bW \in s\Cati \]
is presented by the object
\[ \{ \{ \hom_{s\Set^+}((\Delta^n)^\flat \times (\Delta^i)^\sharp,(\ttR,\ttW)) \}_{i \geq 0} \}_{n \geq 0} \in s(s\Set_\Joyal)_\Reedy . \]
From here, we observe that the Quillen adjunction
\[ \id_{ss\Set} : s(s\Set_\Joyal)_\Reedy \adjarr s(s\Set_\KQ)_\Reedy : \id_{ss\Set} \]
presents the left localization adjunction $s((-)^\gpd) : s\Cati \adjarr s\S : s(\forget_\S)$; as all objects of $s(s\Set_\Joyal)_\Reedy$ are cofibrant, it follows that when considered as an object of $s(s\Set_\KQ)_\Reedy$, this same bisimplicial set presents $\NerveRezki(\R,\bW) \in s\S$.  Moreover, in light of the left Bousfield localization
\[ \id_{ss\Set} : s(s\Set_\KQ)_\Reedy \adjarr ss\Set_\Rezk : \id_{ss\Set} \]
presenting the left localization adjunction $\leftloc_\CSS : s\S \adjarr \CSS : \forget_\CSS$, when considered as an object of $ss\Set_\Rezk$, this same bisimplicial set presents the Rezk nerve
\[ \NerveRezki(\R,\bW) = \left( \Fun^\lw([\bullet],\R)^\bW \right)^\gpd \in \CSS . \]
We will denote this bisimplicial set by $\NerveRezk(\ttR,\ttW) \in ss\Set$.\footnote{When $(\ttR,\ttW)\in s\Set^+$ is the ``marked nerve'' of a relative 1-category, this recovers the 1-categorical Rezk nerve of \cref{infty-catl rezk nerve agrees with 1-catl rezk nerve} (as an object of $ss\Set$), and so there is no ambiguity in the notation.}  In particular, note that we have a natural isomorphism $\NerveRezk(\ttR^\natural) \cong t^!(\ttR)$ in $ss\Set$, and hence we see that the right Quillen equivalence
\[ t^! : s\Set_\Joyal \ra ss\Set_\Rezk \]
presents the equivalence $\Nervei : \Cati \xra{\sim} \CSS$ of $\infty$-categories.

Now, the natural map
\[ \ttR^\natural \ra (\ttR,\ttW) \]
in $s\Set^+$ induces a map
\[ \NerveRezk(\ttR^\natural) \ra \NerveRezk(\ttR,\ttW) \]
in $ss\Set_\Rezk$, which by what we have seen presents the map
\[ \Nervei(\R) \ra \leftloc_\CSS(\NerveRezki(\R,\bW)) \]
in $\CSS$.  So, to prove that this latter map is an epimorphism in $\CSS$, it suffices to prove that the former map is a homotopy epimorphism in $ss\Set_\Rezk$.  However, note that there is a natural isomorphism $t_!(\pr_2^*(\ttR)) \xra{\cong} \ttR$ in $s\Set$, which is in particular a weak equivalence in $s\Set_\Joyal$; via the Quillen equivalence of item \ref{Rezk nerve JT q eq}, this corresponds to a weak equivalence $\pr_2^*(\ttR) \we t^!(\ttR)$ in $ss\Set_\Rezk$.  So, it also suffices to show that the composite map
\[ \pr_2^*(\ttR) \we t^!(\ttR) \cong \NerveRezk(\ttR^\natural) \ra \NerveRezk(\ttR,\ttW) \]
is a homotopy epimorphism in $ss\Set_\Rezk$.

For this, let us also recall the ``usual'' geometric realization functor $ss\Set \ra s\Set$ (a homotopy colimit functor with respect to $s(s\Set_\KQ)_\Reedy$): this is the left Kan extension
\[ \begin{tikzcd}[column sep=4cm]
\bD \times \bD \arrow{r}{([n],[i]) \mapsto \Delta^n \times \Delta^i} \arrow{d} & s\Set \\
ss\Set \arrow[dashed, bend right=5]{ru}
\end{tikzcd} \]
along the (1-categorical) Yoneda embedding, but by \cite[Chapter IV, Exercise 1.6]{GJ} this is (naturally isomorphic to) the functor $\diag^* : ss\Set \ra s\Set$, where $\bD^{op} \xra{\diag} \bD^{op} \times \bD^{op}$ denotes the diagonal functor.  Now, the evident morphisms $\Delta^n \times \Delta^i \ra \Delta^n \times (\Delta^i)^\ttgpd$ in $s\Set$ induce a natural transformation $\diag^* \ra t_!$ in $\Fun(ss\Set,s\Set)$.  Moreover, it is not hard to see that upon precomposition with $s\Set \xra{\pr_2^*} ss\Set$, this induces the identity natural transformation from $\id_{s\Set}$ to itself in $\Fun(s\Set,s\Set)$ (up to isomorphism).  Applying these observations to the above composite map in $ss\Set$, we obtain a commutative square
\[ \begin{tikzcd}
\diag^*(\pr_2^*(\ttR)) \arrow{r}{\alpha} \arrow{d}[sloped, anchor=north]{\rotatebox{180}{$\cong$}} & \diag^*(\NerveRezk(\ttR,\ttW)) \arrow{d}{\beta} \\
t_!(\pr_2^*(\ttR)) \arrow{r}[swap]{\gamma} & t_!(\NerveRezk(\ttR,\ttW))
\end{tikzcd} \]
in $s\Set$, where both objects on the left are (compatibly) isomorphic to $\ttR$ itself.  Since $t_! : ss\Set_\Rezk \ra s\Set_\Joyal$ is a left Quillen equivalence and all objects of $ss\Set_\Rezk$ are cofibrant, it suffices to show that the map $\gamma$ is a homotopy epimorphism in $s\Set_\Joyal$.  For this, it suffices to prove that when considered in $s\Set_\Joyal$, the map $\alpha$ is a weak equivalence and the map $\beta$ is a homotopy epimorphism.  This, finally, is what we will show.

We begin with the second assertion, that the map
\[ \diag^*(\NerveRezk(\ttR,\ttW)) \xra{\beta} t_!(\NerveRezk(\ttR,\ttW)) \]
is a homotopy epimorphism in $s\Set_\Joyal$.  In fact, we will show that the natural transformation $\diag^* \ra t_!$ in $\Fun(ss\Set,s\Set_\Joyal)$ is a componentwise homotopy epimorphism.  Just for the duration of this sub-proof, let us ``reverse'' our simplicial coordinates, so that the one we have been denoting by ``$i$'' will be the \textit{outer} coordinate while the one we have been denoting by ``$n$'' will be the \textit{inner} coordinate.  Now, observe that we can rewrite these two functors as
\[ \diag^* \cong \int^{[i] \in \bD} (-)_i \times \Delta^i : s(s\Set) \ra s\Set \]
and
\[ t_! \cong \int^{[i] \in \bD} (-)_i \times (\Delta^i)^\ttgpd : s(s\Set) \ra s\Set , \]
under which identifications our natural transformation $\diag^* \ra t_!$ is induced by the evident map $\Delta^\bullet \ra (\Delta^\bullet)^\ttgpd$ in $c(s\Set)$.  Moreover, by Proposition T.A.2.9.26, we obtain a left Quillen bifunctor
\[ \int^{[i] \in \bD} (-)_i \times (-)^i : s(s\Set_\Joyal)_\Reedy \times c(s\Set_\Joyal)_\Reedy \ra s\Set_\Joyal \]
(since $s\Set_\Joyal$ is cartesian, i.e.\! the product bifunctor is left Quillen).\footnote{Note that since we have flipped our simplicial coordinates, this model structure $s(s\Set_\Joyal)_\Reedy$ is \textit{different} from the model structure $s(s\Set_\Joyal)_\Reedy$ that appeared earlier (with respect to the fixed copy of the underlying category $ss\Set$ in which we have been working).}  As every object of $s(s\Set_\Joyal)_\Reedy$ is cofibrant, for any object
\[ Y_\bullet \in s(s\Set_\Joyal)_\Reedy \]
the above left Quillen bifunctor induces a left Quillen functor
\[ \int^{[i] \in \bD} Y_i \times (-)^i : c(s\Set_\Joyal)_\Reedy \ra s\Set_\Joyal . \]
Moreover, the cofibrant objects of $c(s\Set_\Joyal)_\Reedy$ are exactly those of $c(s\Set_\KQ)_\Reedy$ (since the cofibrations in $s\Set_\Joyal$ are exactly those of $s\Set_\KQ$), and so in particular the objects $\Delta^\bullet,(\Delta^\bullet)^\ttgpd \in c(s\Set_\Joyal)_\Reedy$ are cofibrant by \cite[Corollary 15.9.10]{Hirsch}.

Now, epimorphisms (being determined by a colimit condition) are preserved by left adjoint functors of $\infty$-categories.  Moreover, by \cite[Theorem 2.1]{adjns}, a left Quillen functor between model categories induces a left adjoint functor between $\infty$-categories, which is presented (in $\strrelcat_\BarKan$) by the restriction of the left Quillen functor to the subcategory of cofibrant objects.  So, it suffices to show that the map $\Delta^\bullet \ra (\Delta^\bullet)^\ttgpd$ is a homotopy epimorphism in $c(s\Set_\Joyal)_\Reedy$.

For this, observe that the model category $c(s\Set_\Joyal)_\Reedy$ presents the $\infty$-category $c\Cati$.  Since epimorphisms in $c\Cati = \Fun(\bD,\Cati)$ are determined componentwise, it suffices to show that each $\Delta^i \ra (\Delta^i)^\ttgpd$ is a homotopy epimorphism in $s\Set_\Joyal$.  But this is clear: this map in $s\Set_\Joyal$ presents the terminal map
\[ [i] \ra [i]^\gpd \simeq \pt_\Cati \]
in $\Cati$, which on an arbitrary $\infty$-category $\C$ corepresents the inclusion
\[ \C^\simeq \hookra \hom_\Cati([i],\C) \]
of the subspace of length-$i$ sequences of composable \textit{equivalences} (inside of the space of arbitrary length-$i$ sequences of composable morphisms).  Thus, the natural transformation $\diag^* \ra t_!$ in $\Fun(ss\Set , s\Set_\Joyal)$ is indeed a componentwise homotopy epimorphism, and so in particular we obtain that the map $\beta$ (which is its component at the object $\NerveRezk(\ttR,\ttW) \in ss\Set$) is a homotopy epimorphism, as claimed.

So, it only remains to show that the map
\[ \ttR \cong \diag^*(\pr_2^*(\ttR)) \xra{\alpha} \diag^*(\NerveRezk(\ttR,\ttW)) \]
is a weak equivalence in $s\Set_\Joyal$.  Unwinding the definitions, we see that via the evident cosimplicial object
\[ \bD \xra{(\Delta^\bullet)^\flat \times (\Delta^\bullet)^\sharp} s\Set^+ , \]
we obtain a canonical isomorphism
\[ \diag^*(\NerveRezk(\ttR,\ttW)) \cong \hom_{s\Set^+}^\lw((\Delta^\bullet)^\flat \times (\Delta^\bullet)^\sharp , (\ttR,\ttW)) . \]
Moreover, via the canonical isomorphisms
\[ \ttR \cong \hom^\lw_{s\Set}(\Delta^\bullet , \ttR) \cong \hom^\lw_{s\Set^+}((\Delta^\bullet)^\flat , \ttR^\flat ) \cong \hom^\lw_{s\Set^+}((\Delta^\bullet)^\flat, (\ttR,\ttW)) , \]
this map $\alpha$ is corepresented by the collection of first projection maps
\[ (\Delta^n)^\flat \times (\Delta^n)^\sharp \ra (\Delta^n)^\flat , \]
which assemble to a natural transformation in $\Fun(\bD,s\Set^+)$.  On the other hand, the collection of diagonal maps
\[ (\Delta^n)^\flat \ra (\Delta^n)^\flat \times (\Delta^n)^\sharp \]
(or more precisely, the unique maps in $s\Set^+$ which recover the diagonal maps in $s\Set$ under the forgetful functor $s\Set^+ \ra s\Set$) also assemble into a natural transformation in $\Fun(\bD,s\Set^+)$, which likewise corepresents a map
\[ \diag^*(\NerveRezk(\ttR,\ttW)) \xra{\rho} \ttR \]
in $s\Set$.  Clearly, the composite
\[ \ttR \xra{\alpha} \diag^*(\NerveRezk(\ttR,\ttW)) \xra{\rho} \ttR \]
is the identity map, since this is true of the composite
\[ (\Delta^n)^\flat \ra (\Delta^n)^\flat \times (\Delta^n)^\sharp \ra (\Delta^n)^\flat \]
of the diagonal map followed by the first projection.  On the other hand, we will show that the composite
\[ \diag^*(\NerveRezk(\ttR,\ttW)) \xra{\rho} \ttR \xra{\alpha} \diag^*(\NerveRezk(\ttR,\ttW)) \]
is connected to $\id_{\diag^*(\NerveRezk(\ttR,\ttW))}$ by the zigzag of simplicial homotopies illustrated in \cref{zigzag of simp htpies},
\begin{figure}[h]
\[ \begin{tikzcd}[row sep=1.5cm, column sep=3cm]
\diag^*(\NerveRezk(\ttR,\ttW)) \arrow{d}[swap]{\Delta^{\{0\}} \times \id} \arrow{rdd}{\id_{\diag^*(\NerveRezk(\ttR,\ttW))}} \\
\Delta^1 \times \diag^*(\NerveRezk(\ttR,\ttW)) \arrow[dashed]{rd}{H_1} \\
\diag^*(\NerveRezk(\ttR,\ttW)) \arrow{u}{\Delta^{\{1\}} \times \id} \arrow{d}[swap]{\Delta^{\{1\}} \times \id} \arrow[dashed]{r}{\eta} & \diag^*(\NerveRezk(\ttR,\ttW)) \\
\Delta^1 \times \diag^*(\NerveRezk(\ttR,\ttW)) \arrow[dashed]{ru}{H_2} \\
\diag^*(\NerveRezk(\ttR,\ttW)) \arrow{u}{\Delta^{\{0\}} \times \id} \arrow{ruu}[swap]{\alpha \rho}
\end{tikzcd} \]
\caption{The zigzag of simplicial homotopies in $s\Set$ in the proof of \cref{natural mono}.}\label{zigzag of simp htpies}
\end{figure}
whose components (i.e.\! whose values on the vertices of (the source copies of) $\diag^*(\NerveRezk(\ttR,\ttW))$) are all degenerate edges of (the target copy of) $\diag^*(\NerveRezk(\ttR,\ttW))$.  Postcomposing with an arbitrary fibrant replacement
\[ \diag^*(\NerveRezk(\ttR,\ttW)) \we \bbR ( \diag^*(\NerveRezk(\ttR,\ttW)) ) \fibn \pt_{s\Set} \]
in $s\Set_\Joyal$, we obtain a composite
\[ \Lambda^2_2 \ra \enrhom_{s\Set}(\diag^*(\NerveRezk(\ttR,\ttW)),\diag^*(\NerveRezk(\ttR,\ttW))) \ra \enrhom_{s\Set}(\diag^*(\NerveRezk(\ttR,\ttW)),\bbR(\diag^*(\NerveRezk(\ttR,\ttW)))) \]
in $s\Set_\Joyal$ which, by \cite[Chapter 5, Theorem C]{Joyal-qcats-2} (and \cite[Proposition 4.8]{Joyal-qcats-2} (and the fact that $s\Set_\Joyal$ is cartesian)), presents a zigzag of natural equivalences in $\Cati$ between the functors presented by the maps $\id_{\diag^*(\NerveRezk(\ttR,\ttW))}$ and $\alpha\rho$ in $s\Set_\Joyal$.  In turn, this zigzag (along with the natural equivalence in $\Cati$ presented by the identification $\rho \alpha = \id_\ttR$) witnesses the fact that the maps $\alpha$ and $\rho$ in $s\Set_\Joyal$ present inverse equivalences in $\Cati$, from which we conclude that in particular the map $\alpha$ is indeed a weak equivalence in $s\Set_\Joyal$.

Now, all three of $\eta$, $H_1$, and $H_2$ will be corepresented by maps between the various objects $(\Delta^n)^\flat \times (\Delta^n)^\sharp \in s\Set^+$; in turn, all of these maps will be obtained by applying the evident ``marked nerve'' functor $\Nerve^+ : \strrelcat \ra s\Set^+$ to maps between the various objects $[n] \times [n]_\bW \in \strrelcat$.

We begin by defining the map $\diag^*(\NerveRezk(\ttR,\ttW)) \xra{\eta} \diag^*(\NerveRezk(\ttR,\ttW))$: this is corepresented by the marked nerves of the maps
\[ [n] \times [n]_\bW \xra{\eta^n} [n] \times [n]_\bW \]
in $\strrelcat$ given by
\[ \eta^n(i,j) = \left\{ \begin{array}{ll}
(i,i), & i \geq j \\
(i,j) , & i < j .
\end{array} \right. \]
It is easy to verify that this does indeed define a map in $\strrelcat$, and moreover that assembling these maps for all $n \geq 0$ yields an endomorphism of the object $[\bullet] \times [\bullet]_\bW \in c\strrelcat$.

In order to define the simplicial homotopies $H_1$ and $H_2$, we first recall a combinatorial reformation of the definition of a simplicial homotopy (see e.g.\! \cite[Definitions 5.1]{MaySimp}): for any $Y,Z \in s\Set$ and any $f,g \in \hom_{s\Set}(Y,Z)$, a simplicial homotopy
\[ \begin{tikzcd}[row sep=1.5cm, column sep=3cm]
Y \arrow{d}[swap]{\Delta^{\{0\}} \times \id} \arrow{rd}{g} \\
\Delta^1 \times Y \arrow[dashed]{r}{h} & Z \\
Y \arrow{u}{\Delta^{\{1\}} \times \id} \arrow{ru}[swap]{f}
\end{tikzcd} \]
is equivalently given by a family of maps
\[ \{ h_{i,n} \in \hom_{\Set}(Y_n,Z_{n+1}) \}_{0 \leq i \leq n \geq 0} \]
which satisfy the identities
\[ \delta_0 h_{0,n} = f_n , \]
\[ \delta_{n+1} h_{n,n} = g_n , \]
\[ \delta_i h_{j,n} = \left\{ \begin{array}{ll}
h_{j-1,n-1} \delta_i , & i < j \\
\delta_i h_{i-1,n} , & i = j \not= 0 \\
h_{j,n-1} \delta_{i-1} , & i > j+1 , \end{array} \right. \]
and
\[ \sigma_i h_{j,n} = \left\{ \begin{array}{ll}
h_{j+1,n+1} \sigma_i , & i \leq j \\
h_{j,n+1} \sigma_{i-1} , & i > j . \end{array} \right. \]
So, for $\varepsilon \in \{ 1, 2\}$, we will define the simplicial homotopies
\[ \Delta^1 \times \diag^*(\NerveRezk(\ttR,\ttW)) \xra{H_\varepsilon} \diag^*(\NerveRezk(\ttR,\ttW)) \]
to be corepresented by the marked nerves of families of maps
\[ \{ H^{i,n}_\varepsilon \in \hom_{\strrelcatsup}([n+1] \times [n+1]_\bW , [n] \times [n]_\bW ) \}_{0 \leq i \leq n \geq 0} \]
satisfying the opposites of the identities given above (with the first two ``boundary condition'' identities being dictated by their respective sources and targets).  Namely, we define
\[ H_1^{i,n}(j,k) = \left\{ \begin{array}{ll}
(j,k), & 0 \leq j,k \leq i \\
(j-1,j-1), & j > i \textup{ and } j \geq k \\
(j,k-1), & k > i \geq j \\
(j-1,k-1), & k > j > i
\end{array} \right. \]
and
\[ H_2^{i,n}(j,k) = \left\{ \begin{array}{ll}
(j,j) , & j \leq i \\
(j-1,j-1), & j > i \textup{ and } j \geq k \\
(j-1,k-1), & k > j > i .
\end{array} \right. \]
It is a straightforward (but lengthy) process to verify
\begin{itemize}
\item that these satisfy the opposites of the identities given above,
\item that they restrict along their boundaries to the various maps
\[ \id_{\diag^*(\NerveRezk(\ttR,\ttW))} , \eta , \alpha \rho \in \hom_{s\Set}(\diag^*(\NerveRezk(\ttR,\ttW)) , \diag^*(\NerveRezk(\ttR,\ttW)) ) \]
as indicated in \cref{zigzag of simp htpies}, and
\item that their values on vertices are all degenerate edges,
\end{itemize}
as claimed.  This completes the proof.
\end{proof}

\bibliographystyle{amsalpha}
\bibliography{rnerves}{}

\end{document}